\numberwithin{equation}{section}
\numberwithin{figure}{section}
\theoremstyle{plain}
\newtheorem{thm}{\protect\theoremname}[section]
\theoremstyle{remark}
\newtheorem{claim}[thm]{\protect\claimname}
\theoremstyle{plain}
\newtheorem{lem}[thm]{\protect\lemmaname}
\date{\today}
\providecommand{\claimname}{Claim}
\providecommand{\lemmaname}{Lemma}
\providecommand{\theoremname}{Theorem}
\begin{document}
\title{Shrinking Targets for Non-autonomous Systems}
\author{Marco Antonio L\'opez}
\begin{abstract}
In the present work we establish a Bowen-type formula for the Hausdorff
dimension of shrinking-target sets for non-autonomous conformal iterated
function systems in arbitrary dimensions and satisfying certain conditions.
In the case of dimension 1 we also investigate non-linear perturbations
of linear systems and obtain sufficient conditions under which the
perturbed systems satisfy the conditions in our hypotheses.
\end{abstract}

\maketitle

\section{Introduction}

In \cite{bowen_hausdorff_1979}, Rufus Bowen proved a dimension result
for certain dynamically-defined sets in terms of a topological pressure
function. Such formulas relating Hausdorff dimension to topological
pressure came to be known as Bowen's formula. Since Bowen's original
work, many others have extended Bowen's formula to several different
contexts \cite{baranski_bowens_2012,mauldin_dimensions_1996,mauldin_graph_2003,rugh_dimensions_2008}.
For a first introduction to Bowen's equation see Chapter 9 of \cite{barreira_ergodic_2012}.
The first results of such type for shrinking-target sets appeared
in a series of papers by Hill and Velani. In \cite{hill_metric_1997}
they prove that if $T$ is an expanding rational map of the Riemann
sphere with Julia set $J$, then for every $z_{0}\in J$ and $\tau>0$,
the Hausdorff dimension of the set
\[
W\left(\tau\right)=\bigcap_{m\ge1}\bigcup_{n\ge m}\left\{ z\in J\mid T^{n}\left(z\right)\in B\left(z_{0},\,\left|\left(T^{n}\right)'\left(z\right)\right|^{-\tau}\right)\right\} 
\]
is the unique solution to the equation $P\left(s\right)=0$, where
$P$ is a pressure function associated to the map $T$ and the constant
$\tau$ \cite{hill_metric_1997}.

In classical Diophantine approximation the set of $\alpha$-well approximable
numbers are 
\[
\mathscr{D}_{\alpha}=\bigcap_{n\ge1}\bigcup_{q\ge n}\bigcup_{0\le p\le q}\left\{ \theta\in\left[0,\,1\right]\backslash\mathbb{Q}\colon\left|\theta-\frac{p}{q}\right|\le\frac{1}{q^{\alpha}}\right\} .
\]
It is well known that if $0\leq\alpha\le2$ this set is $\left[0,\,1\right]\backslash\mathbb{Q}$.
By the Borel-Cantelli lemma, $\mathscr{D}_{\alpha}$ is a set of Lebesgue
measure zero for all $\alpha>2$. Thus, for such sets a natural question
is the Hausdorff dimension of $\mathscr{D}_{\alpha}.$ Jarnik \cite{jarnik_diophantische_1929}
and Besicovitch \cite{besicovitch_sets_1934} both proved that the
Hausdorff dimension of $\mathscr{D}_{\alpha}$ is $\frac{2}{\alpha}.$

The sets $W\left(\tau\right)$ and $\mathscr{D}_{\alpha}$ are examples
of shrinking-target sets. In dynamical systems and metric Diophantine
approximation shrinking-target sets have been studied in various contexts.
Two questions that often arise from shrinking target problems are
dichotomy laws or Borel-Cantelli lemmas (see \cite{chernov_dynamical_2001}
or \cite{sun_dichotomy_2017} for example), and Hausdorff dimension
of such sets. In this paper we will focus on the latter.

\subsection{Nonautonomous IFS}

Recently, Rempe-Gillen and Urba\'{n}ski \cite{rempe-gillen_non-autonomous_2015}
expanded Bowen's formula into the realm of nonautonomous iterated
function systems (IFSs).

An autonomous IFS consists of a countable indexing set $I$ called
the alphabet, and a collection $\left(\varphi_{a}\right)_{a\in I}$
of contracting maps on some set $X\subseteq\mathbb{R}^{d}.$ The Cartesian
product $I^{n}$ is referred to as the set of words of length $n$,
and for every $\omega\in I^{n}$ we define $\varphi_{\omega}^{n}\colon X\to X$
by the composition 
\[
\varphi_{\omega}^{n}=\varphi_{\omega_{1}}\circ\varphi_{\omega_{2}}\circ\cdots\circ\varphi_{\omega_{n}},
\]
 where $\omega_{j}$ denotes the $j$-the coordinate of $\omega$.

As an example, consider the celebrated middle-third Cantor set $C.$
Let $I=\left\{ 0,2\right\} $ and for each $a\in I$ define $\varphi_{a}\colon C\to C$
as 
\[
\varphi_{a}\left(x\right)=\frac{x+a}{3}.
\]
Note that if we consider the alphabet $I=\left\{ 0,1,2\right\} $
and define $\varphi_{a}$ as above, then each map $\varphi_{a}$ corresponds
to one of the three inverse branches of the expanding map $T\left(x\right)=3x$$\mod1$
on $\left[0,1\right]$. More precisely,
\[
T\left(\varphi_{a}\left(x\right)\right)=x,x\neq1
\]
for all $a\in I$ and 
\[
T^{n}\left(\varphi_{\omega}^{n}\left(x\right)\right)=x,x\neq1
\]
for all $\omega\in I^{n}.$

Instead of only considering one alphabet $I,$ a nonautonomous IFS
considers a countable collection $\left(I^{\left(n\right)}\right)_{n\in\mathbb{N}}$
of such alphabets. For each $n$ there is again a collection $\left(\varphi_{a}^{\left(n\right)}\right)_{a\in I^{\left(n\right)}}$
of contractions on $X$. Letting $I^{n}$ denote the cartesian product
$I^{\left(1\right)}\times I^{\left(2\right)}\times\cdots\times I^{\left(n\right)}$
we define 
\[
\varphi_{\omega}^{n}=\varphi_{\omega_{1}}^{\left(1\right)}\circ\varphi_{\omega_{2}}^{\left(2\right)}\circ\cdots\circ\varphi_{\omega_{n}}^{\left(n\right)},
\]
where $\omega_{j}\in I^{\left(j\right)}.$

In \cite{rempe-gillen_non-autonomous_2015} the authors consider nonautonomous
conformal iterated function systems $\Phi$ on $X\subset\mathbb{R}^{n}$
and their associated limit set
\[
J=\bigcap_{n\ge1}\bigcup_{\omega\in I^{n}}\varphi_{\omega}\left(X\right).
\]
Under suitable assumptions on $\Phi$, Rempe-Gillen and Urba\'{n}ski
show a Bowen-type formula for the limit set, that is, 
\[
\text{HD}\left(J\right)=\sup\left\{ t\ge0\mid\underline{P}\left(t\right)\ge0\right\} ,
\]
where
\begin{equation}
\underline{P}_{J}\left(t\right)=\liminf_{n\to\infty}\frac{1}{n}\log\left(\sum_{\omega\in I^{n}}\left\Vert D\varphi_{\omega}^{n}\right\Vert ^{t}\right)\label{eq:RU pressure}
\end{equation}

In \cite{fishman_shrinking_2015}, the authors explore the shrinking
target problem for a certain class of nonautonomous systems. Specifically,
for a sequence $Q=\left(q_{n}\right)$ of integers no smaller than
2, define 
\[
T_{n}=x\mapsto q_{n}x\text{ (mod 1)}\colon\left[0,\,1\right]\to\left[0,\,1\right].
\]

This sequence of maps gives rise to a nonautonomous dynamical system
on $\left[0,\,1\right]$ whose orbits are defined by $T^{n}=T_{n}\circ\cdots\circ T_{1}.$ 

Given a sequence $\alpha=\left(\alpha_{n}\right)\in\left(0,\,\infty\right)^{\mathbb{N}}$
and letting $\alpha\left(n\right)=\alpha_{1}+\cdots+\alpha_{n}$,
the shrinking target associated to $Q$ and $\alpha$ is defined as
\[
\mathscr{D}_{Q}\left(\alpha\right)=\bigcap_{m\ge0}\bigcup_{n\ge m}\left\{ x\in\left[0,\,1\right]\mid\left|T^{n}\left(x\right)\right|\le e^{-\alpha\left(n\right)}\right\} ,
\]
where $\left|x\right|$ denotes distance to the nearest integer. The
pressure associated to $Q$ and $\alpha$ is given by
\[
P_{Q}\left(t\right)=\limsup_{n\to\infty}\frac{1}{n}\log\left(\left(q_{1}q_{2}\cdots q_{n}\right)^{1-t}e^{-t\alpha\left(n\right)}\right).
\]

Note that $\mathscr{D}_{Q}\left(\alpha\right)$ can be rewritten in
terms of a non-autonomous IFS. Inded, if we define $I^{\left(n\right)}=\left\{ 0,1,\ldots,q_{n}-1\right\} $
and for, $a\in I^{\left(n\right)}$, $\varphi_{a}^{\left(n\right)}\colon\left[0,1\right]\to\left[0,1\right]$
as 
\[
\varphi_{a}^{\left(n\right)}\left(x\right)=q_{n}^{-1}\left(x+a\right),
\]
then
\[
\mathscr{D}_{Q}\left(\alpha\right)=\bigcap_{m\ge0}\bigcup_{n\ge m}\bigcup_{\omega\in I^{n}}\varphi_{\omega}^{n}\left(\left[0,1\right]\right).
\]

The main result in \cite{fishman_shrinking_2015} is an extension
of Bowen's formula, namely that 
\[
\text{HD}\left(\mathscr{D}_{Q}\left(\alpha\right)\right)=\sup\left\{ t\ge0\mid P\left(t\right)\ge0\right\} .
\]

Our main results, Theorems \ref{thm:boundedwithgaps} and \ref{thm:unboundedwithgaps},
establish Bowen's formula for a certain class of IFS coming from those
considered in \cite{rempe-gillen_non-autonomous_2015} satisfying
certain natural conditions. This class generalizes those IFSs in \cite{fishman_shrinking_2015}
in two important ways:
\begin{enumerate}
\item We consider a certain class of IFS in higher dimensions; that is,
on subsets of $\mathbb{R}^{d},$$d\ge1.$
\item We relax the condition that for fixed $n$, the derivatives $D\varphi_{a}^{\left(n\right)}\left(x\right)$
remain constan over all $x\in X$ and over all $a\in I^{\left(n\right)}.$
\end{enumerate}

\subsection{Organization}

In Section \ref{sec:Definitions-and-Preliminaries} we establish our
notation and basic definitions. In Section \ref{sec:Upper-bound}
we prove an upper bound for the Hausdorff dimension of our sets of
interest. The proof is fairly elementary and general. Our main results
are in Section \ref{sec:Lower-Bound}. It begins by defining and describing
all the conditions necessary in the hypothesis of our theorems. Then
we state and prove the main theorems. In Section \ref{sec:Ahlfors-Measures}
we pay special attention to one of the conditions in our hypotheses:
the existance of Ahlfors measures. We prove sufficient conditions
for their existance. Finally, in Section 6 we focus on the case in
Euclidean dimension $d=1$ and investigate the ``rigidity'' of IFS
satisfying our conditions. We prove that IFSs preserve all the required
conditions under sufficiently small perturbations.

\section{\label{sec:Definitions-and-Preliminaries}Definitions and Preliminaries}

Let $X\subset\mathbb{R}^{d}$ be a compact, convex subset with nonempty
interior an let $V$ be a bounded, open, connected set containing
$X$. Consider a countable collection $\left(I^{\left(n\right)}\right)_{n\in\mathbb{N}}$
of finite alphabets which will be used to encode a nonautonomous iterated
function system (IFS) in the following way. For every $n\in\mathbb{N}$
and every $j\in I^{\left(n\right)}$ we fix conformal contractions
$\varphi_{j}^{\left(n\right)}\colon V\to V$ such that $\varphi_{j}^{\left(n\right)}\left(X\right)\subseteq X$;
that is, there exists $\theta>0$ such that for all $n\in\mathbb{N}$
and all $j\in I^{\left(n\right)}$ we have that 
\[
\overline{\kappa}_{\left(n\right)}:=\max_{j\in I^{\left(n\right)}}\left\Vert \left.D\varphi_{j}^{\left(n\right)}\right.\right\Vert \le e^{-\theta},
\]
 and $D\varphi_{j}^{\left(n\right)}$ is a similarity.

Letting 
\[
I^{n}=\prod_{k=1}^{n}I^{\left(k\right)},
\]
 we define for every $\omega=\left(\omega_{1},\ldots,\omega_{n}\right)\in I^{n}$
the map 
\[
\varphi_{\omega}^{n}=\varphi_{\omega{}_{1}}^{\left(1\right)}\circ\varphi_{\omega_{2}}^{\left(2\right)}\circ\cdots\circ\varphi_{\omega_{n}}^{\left(n\right)}.
\]
Furthermore, products of the form $I^{\left(k\right)}\times I^{\left(k+1\right)}\times\cdots\times I^{\left(n\right)}$
will be denoted by $I^{\left(k,n\right)}$, where $n$ may be infinity.
The set $I^{\left(1,\infty\right)}$ will simply be denoted by $I^{\infty}.$
We will also make use of the shift map $\sigma$, which takes a word
$\omega\in I^{n}$ to a word $\sigma^{k}\omega\in I^{\left(k+1,\,n\right)}$
where 
\[
\sigma^{k}\omega=\left(\omega_{k+1},\ldots,\omega_{n}\right)
\]
 for all $0\le k<n$. The empty word $\sigma^{n}\omega$ is used to
encode the identity map, i.e., $\varphi_{\sigma^{n}\omega}=\text{id}$
for all $\omega\in I^{n}$. 

On the other hand, for $\omega\in I^{\left(m,\,n\right)}$ we will
let $\omega\vert_{k}$ denote the word $\left(\omega_{1},\ldots,\omega_{k}\right)\in I^{\left(m,m+k-1\right)}$
for all $1\le k\le n-m+1.$ 

To define a shrinking target set we fix a sequence $\left(\beta_{n}\right)$
of functions $\beta_{n}\colon I^{\left(n,\infty\right)}\to\left(0,\,\infty\right).$
Let $S_{n}\beta\colon I^{\infty}\to\left(0,\,\infty\right)$ be defined
by
\[
S_{n}\beta\left(\xi\right)=\beta_{1}\left(\xi\right)+\beta_{2}\left(\sigma\xi\right)+\cdots+\beta_{n}\left(\sigma^{n-1}\xi\right).
\]
The quantity above will determine the rate at which the shrinking
targets shrink to zero radius in the following way: Fix a sequence
$\left(\xi^{\left(n\right)}\right)$ where $\xi^{\left(n\right)}\in I^{\left(n+1,\,\infty\right)}$,
and a sequence $\left(x^{\left(n\right)}\right)\in X^{\mathbb{N}}$.
For every $\omega\in I^{n}$ we define the shrinking targets as 
\[
B_{\omega}=B\left(\varphi_{\omega}^{n}\left(x^{\left(n\right)}\right),\,e^{-S_{n}\beta\left(\omega\xi^{\left(n\right)}\right)}\right).
\]
 The shrinking target set is then defined as
\[
\mathscr{D}=\bigcap_{N=1}^{\infty}\bigcup_{n=N}^{\infty}\bigcup_{\omega\in I^{n}}B_{\omega}.
\]

As a special case one may consider the one where $\beta_{n}$ is a
constant function into $\left(0,\,\infty\right)$, as it is done in
\cite{fishman_shrinking_2015}.

We denote the Hausdorff dimension of a set $A$ by $\text{HD}\left(A\right).$
Let us also denote the diameter of a set $A$ by $\left|A\right|$.

Now for $t\ge0$ we define the upper pressure
\begin{align}
\overline{P}_{\beta}\left(t\right) & =\limsup_{n\to\infty}\frac{1}{n}\log\sum_{\omega\in I^{n}}e^{-tS_{n}\beta\left(\omega\xi^{\left(n\right)}\right)}.\label{eq:Pressure}
\end{align}

The lower pressure $\underline{P}_{\beta}\left(t\right)$ is defined
similarly by taking a limit inferior instead of a limit superior.
If $\overline{P}_{\beta}\left(t\right)=\underline{P}_{\beta}\left(t\right)$
holds, we denote this common value by $P_{\beta}\left(t\right)$. 

Now we briefly explore certain properties of the pressure functions.
Note that for $\epsilon>0$ we have that 
\begin{align*}
\overline{P}_{\beta}\left(t+\epsilon\right) & =\limsup_{n\to\infty}\frac{1}{n}\log\sum_{\omega\in I^{n}}\left(e^{-tS_{n}\beta\left(\omega\xi^{\left(n\right)}\right)}e^{-\epsilon S_{n}\beta\left(\omega\xi^{\left(n\right)}\right)}\right)\\
 & \le\limsup_{n\to\infty}\left[\frac{1}{n}\log\left(\sum_{\omega\in I^{n}}e^{-tS_{n}\beta}\right)\right]\\
 & =\overline{P}_{\beta}\left(t\right),
\end{align*}
so the upper (as well as lower) pressure function is non-increasing.
We say that the sequence $\left(\beta_{n}\right)$ is tame if the
the upper pressure is strictly decreasing. Furthermore, assuming $\#I^{\left(k\right)}\ge2$
for all $k$ it is immediate that $\underline{P}_{\beta}\left(0\right)\ge\log\left(2\right).$ 

Now, if we assume that $B>0$ such $\#I^{\left(n\right)}\le B$ for
all $n\in\mathbb{N}$, and that (\ref{eq:ESC}) holds then 
\begin{align*}
\overline{P}_{\beta}\left(d\right) & =\limsup_{n\to\infty}\frac{1}{n}\log\sum_{\omega\in I^{n}}e^{-dS_{n}\beta\left(\omega\xi^{\left(n\right)}\right)}\\
 & \le\limsup_{n\to\infty}\frac{1}{n}\log\left(B^{n}\max\left\{ e^{-dS_{n}\beta\left(\omega\xi^{\left(n\right)}\right)}\colon\omega\in I^{n}\right\} \right)\\
 & \le\limsup_{n\to\infty}\frac{1}{n}\log\left(B^{n}\overline{\kappa}_{n}^{d}e^{-nd\underline{\alpha}}\right)\\
 & \le\log B-d\underline{\alpha}+\limsup_{n\to\infty}\left(\frac{d}{n}\log\overline{\kappa}_{n}\right)\\
 & \le\log B-d\underline{\alpha}-d\theta
\end{align*}
It follows that $\overline{P}_{\beta}\left(d\right)\le0$ if $B\le e^{d\left(\theta+\underline{\alpha}\right)}.$

We observe that if $\overline{P}_{\beta}$ is strictly decreasing,
and $\overline{P}_{\beta}\left(0\right)\cdot\overline{P}_{\beta}\left(d\right)<0$,
then there exists a unique number $0<b<d$ such that

\[
b=\inf\left\{ t\ge0\mid\overline{P}_{\beta}\left(t\right)<0\right\} =\sup\left\{ t\ge0\mid\overline{P}_{\beta}\left(t\right)>0\right\} .
\]
Note that such a unique number $b$ still exists in $\left[0,\,\infty\right]$
when only assuming condition (\ref{eq:ESC}). We refer to such number
as the Bowen parameter. The main objective of our analysis is to establish
conditions under which $\text{HD}\left(\mathscr{D}\right)=b$.

\section{\label{sec:Upper-bound}Upper bound }

We say that a countable collection $\left(U_{k}\right)$ of subsets
of $\mathbb{R}^{d}$ is a $\delta$-cover of $\mathscr{D}$ if $\mathscr{D}\subset\bigcup\left(U_{k}\right)$
and $\text{diam}\left(U_{k}\right)\le\delta$ for all $k$. We recall
here the definition of $t$-dimensional Hausdorff measure. 
\begin{align*}
H^{t}\left(\mathscr{D}\right) & =\lim_{\alpha\to0}\inf\left\{ \sum_{k=1}^{\infty}\left[\text{diam}\left(U_{k}\right)\right]^{t}\mid\left(U_{k}\right)_{k\ge1}\text{ is an \ensuremath{\alpha}-cover of }\mathscr{D}\right\} \\
 & =\sup_{\alpha>0}\inf\left\{ \sum_{k=1}^{\infty}\left[\text{diam}\left(U_{k}\right)\right]^{t}\mid\left(U_{k}\right)_{k\ge1}\text{ is an \ensuremath{\alpha}-cover of }\mathscr{D}\right\} .
\end{align*}
Hausdorff dimension is then defined as 
\[
\text{HD}\left(\mathscr{D}\right)=\inf\left\{ t\ge0\mid H^{t}\left(\mathscr{D}\right)=0\right\} =\sup\left\{ t\ge0\mid H^{t}\left(\mathscr{D}\right)=\infty\right\} .
\]

\begin{thm}
\label{thm:upper bound}For any shrinking target set $\mathscr{D}$
originating from a non-autonomous IFS and a tame sequence $\beta$,
we have that $\text{HD}\left(\mathscr{D}\right)\le b$. 
\end{thm}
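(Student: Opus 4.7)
The plan is to exploit the natural cover of $\mathscr{D}$ coming from its definition as a limsup set. For each $N\ge 1$, the collection $\mathcal{C}_N=\{B_{\omega}\mid n\ge N,\,\omega\in I^{n}\}$ manifestly covers $\mathscr{D}$. Since $\operatorname{diam}(B_{\omega})=2e^{-S_{n}\beta(\omega\xi^{(n)})}$, the $t$-sum associated to this cover is
\[
\sum_{n\ge N}\sum_{\omega\in I^{n}}\left[\operatorname{diam}(B_{\omega})\right]^{t}=2^{t}\sum_{n\ge N}\sum_{\omega\in I^{n}}e^{-tS_{n}\beta(\omega\xi^{(n)})}.
\]
So the task reduces to showing both that this sum vanishes as $N\to\infty$ for every $t>b$, and that the maximum diameter in $\mathcal{C}_N$ tends to zero, so that $\mathcal{C}_N$ becomes an admissible $\delta$-cover in the definition of $H^{t}$.

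For the first point, fix any $t>b$. By the definition of the Bowen parameter and the non-increasing nature of $\overline{P}_{\beta}$ already established in the preliminaries, $\overline{P}_{\beta}(t)<0$. Hence there exist $\delta>0$ and $n_{0}\in\mathbb{N}$ such that for every $n\ge n_{0}$,
\[
\sum_{\omega\in I^{n}}e^{-tS_{n}\beta(\omega\xi^{(n)})}\le e^{-n\delta}.
\]
Summing the geometric tail gives $\sum_{n\ge N}\sum_{\omega\in I^{n}}[\operatorname{diam}(B_{\omega})]^{t}\le 2^{t}e^{-N\delta}/(1-e^{-\delta})\to 0$ as $N\to\infty$. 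For the second point, I would invoke the uniform contraction bound $\overline{\kappa}_{(n)}\le e^{-\theta}$ together with the implicit lower bound on $\beta_{n}$ furnished by condition (\ref{eq:ESC}) (the quantity $\underline{\alpha}$ already used in the preliminaries), giving $S_{n}\beta\ge n\underline{\alpha}$ and hence $\max_{\omega\in I^{n}}\operatorname{diam}(B_{\omega})\le 2e^{-n\underline{\alpha}}\to 0$.

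Combining the two estimates, for every $t>b$ we obtain $H^{t}(\mathscr{D})=0$, so $\text{HD}(\mathscr{D})\le t$; letting $t\downarrow b$ yields the claim. There is essentially no deep obstacle: the argument is a direct limsup-cover estimate analogous to the easy half of Borel--Cantelli. The only subtle point is checking that the diameters in $\mathcal{C}_N$ are uniformly small, which is what allows us to use $\mathcal{C}_N$ as a legitimate $\delta$-cover in $H^{t}$; this is where one must rely on the standing lower bound on the $\beta_{n}$ encoded in condition (\ref{eq:ESC}), rather than on tameness alone. Tameness is only needed to ensure that $b$ is a meaningful threshold (unique value where $\overline{P}_{\beta}$ changes sign), not in the estimate itself.
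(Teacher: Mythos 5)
Your argument is essentially identical to the paper's: the same limsup cover $\{B_{\omega}: n\ge N,\ \omega\in I^{n}\}$, the same reduction to the $t$-sum, and the same geometric-series bound coming from $\overline{P}_{\beta}(t)<0$ for $t>b$. You add one step the paper leaves implicit — verifying that $\max_{n\ge N,\omega}\operatorname{diam}(B_{\omega})\to 0$ so the cover is an admissible $\delta$-cover — and this is a genuine point worth spelling out.

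One small remark on how you justify that step: you appeal to (\ref{eq:ESC}), but that condition is not part of the hypothesis of this theorem (it is introduced only for the lower bound), and in fact it is not needed here. The diameter condition already follows from tameness: once $\overline{P}_{\beta}(t)<0$, there is $\delta>0$ with $\sum_{\omega\in I^{n}}e^{-tS_{n}\beta(\omega\xi^{(n)})}<e^{-n\delta}$ for large $n$; dropping to a single summand gives $e^{-tS_{n}\beta(\omega\xi^{(n)})}<e^{-n\delta}$ for every $\omega$, hence $\min_{\omega}S_{n}\beta(\omega\xi^{(n)})>n\delta/t\to\infty$ and $\max_{\omega}\operatorname{diam}(B_{\omega})\to 0$. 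So the proof goes through under exactly the stated hypotheses without invoking ESC. This is a minor over-assumption rather than a gap; the rest is correct.
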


\begin{proof}
Let $t>b$. We will show that $H^{t}\left(\mathscr{D}\right)=0$.
Note that for any $N\ge1$ the collection $\left(\bigcup_{\omega\in I^{n}}B_{\omega}\right)_{n\ge N}$
covers $\mathscr{D}$, so 
\begin{align*}
H^{t}\left(\mathscr{D}\right) & \le\sum_{n\ge N}\sum_{\omega\in I^{n}}\left[\text{diam}\left(B_{\omega}\right)\right]^{t}\\
 & =2^{t}\sum_{n\ge N}\sum_{\omega\in I^{n}}e^{-tS_{n}\left(\omega\xi^{\left(n\right)}\right)}.
\end{align*}
 Since $t>b$ and $\beta$ is tame we have that $\overline{P}_{\beta}\left(t\right)<0$.
Thus, for large enough $M$, 
\[
n\ge M\Longrightarrow\frac{1}{n}\log\sum_{\omega\in I^{n}}e^{-tS_{n}\beta\left(\omega\xi^{\left(n\right)}\right)}<\frac{1}{2}\overline{P}_{\beta}\left(t\right)<0.
\]
Hence, 
\[
\sum_{\omega\in I^{n}}e^{-tS_{n}\beta\left(\omega\xi^{\left(n\right)}\right)}<e^{\frac{n}{2}\overline{P}_{\beta}\left(t\right)}<1.
\]
Thus, 
\[
\sum_{n\ge N}\sum_{\omega\in I^{n}}\left[\text{diam}\left(B_{\omega}\right)\right]^{t}\le2^{t}\sum_{n\ge N}e^{\frac{n}{2}\overline{P}_{\beta}\left(t\right)}.
\]
The right hand side of the inequality above is the tail of a converging
geometric series. After fixing $\epsilon>0$ we can choose $N$ large
enough so that 
\[
\sum_{n\ge N}\sum_{\omega\in I^{n}}\left[\text{diam}\left(B_{\omega}\right)\right]^{t}<\epsilon.
\]
This shows that $H^{t}\left(\mathscr{D}\right)<\epsilon$. Since $\epsilon>0$
and $t>b$ were chosen arbitrarily, we have that $\text{HD}\left(\mathscr{D}\right)\le b$.
\end{proof}

\section{\label{sec:Lower-Bound}Lower Bound}

For the proof of the lower bound we will need to impose some restrictions
on our IFS. First we establish some preliminary definitions and results. 

We define 
\begin{align*}
\underline{\kappa}_{\left(n\right)} & =\min_{j\in I^{\left(n\right)}}\inf_{x\in X}\left|D\varphi_{j}^{\left(n\right)}\left(x\right)\right|,\\
\overline{\kappa}_{\left(n\right)} & =\max_{j\in I^{\left(n\right)}}\sup_{x\in X}\left|D\varphi_{j}^{\left(n\right)}\left(x\right)\right|,\\
\underline{\kappa}_{n} & =\min_{\omega\in I^{n}}\inf_{x\in X}\left|D\varphi_{j}^{\left(n\right)}\left(x\right)\right|,\\
\overline{\kappa}_{n} & =\max_{\omega\in I^{n}}\sup_{x\in X}\left|D\varphi_{j}^{\left(n\right)}\left(x\right)\right|.
\end{align*}

It is easy to check that
\begin{equation}
\prod_{k=1}^{n}\underline{\kappa}_{\left(k\right)}\le\underline{\kappa}_{n}\le\overline{\kappa}_{n}\le\prod_{k=1}^{n}\overline{\kappa}_{\left(k\right)}.\label{eq:kappa products}
\end{equation}

Let $J$ be the limit set (attractor) of the IFS, i.e.,
\[
J=\bigcap_{n\ge1}\bigcup_{\omega\in I^{n}}\varphi_{\omega}^{n}\left(X\right).
\]

Consider the projection map $\pi_{n}\colon I^{\left(n+1,\,\infty\right)}\to X$
where $\pi_{n}\left(\xi\right)$ is defined as the element in the
singleton set 
\[
\bigcap_{k\ge1}\varphi_{\xi\vert_{k}}^{\left(n+1,\,n+k\right)}\left(X\right).
\]

We also consider a sequence of dinamically-defined sets $J_{n}$,
\[
J_{n}=\pi_{n}\left(I^{\left(n+1,\,\infty\right)}\right).
\]

We note that for every $n\in\mathbb{N}$ and every $\omega\in I^{n}$,
$\varphi_{\omega}^{n}\left(J_{n}\right)\subseteq J;$ indeed, 
\begin{align*}
\varphi_{\omega}^{n}\left(J_{n}\right) & =\varphi_{\omega}^{n}\left(\bigcup_{\xi\in I^{\left(n+1,\,\infty\right)}}\bigcap_{k\ge1}\varphi_{\xi\vert_{k}}^{\left(n+1,\,n+k\right)}\left(X\right)\right)\\
 & \subseteq\bigcup_{\xi\in I^{\left(n+1,\,\infty\right)}}\bigcap_{k\ge1}\varphi_{\omega}^{n}\left(\varphi_{\xi\vert_{k}}^{\left(n+1,\,n+k\right)}\left(X\right)\right)\\
 & =\bigcap_{k\ge1}\bigcup_{\xi\in I^{\left(n+1,\,\infty\right)}}\varphi_{\omega}^{n}\left(\varphi_{\xi\vert_{k}}^{\left(n+1,\,n+k\right)}\left(X\right)\right)\\
 & =\bigcap_{k\ge1}\bigcup_{\xi\in I^{\left(n+1,\,\infty\right)}}\varphi_{\omega\xi_{n+k}}^{n+k}\left(X\right)\\
 & =\bigcap_{k\ge1}\bigcup_{\xi\in I^{\left(n+1,\,n+k\right)}}\varphi_{\omega\xi_{n+k}}^{n+k}\left(X\right)\\
 & \subseteq\bigcap_{k\ge n+1}\bigcup_{\tau\in I^{k}}\varphi_{\tau}^{k}\left(X\right)\\
 & =J.
\end{align*}
For every $n\in\mathbb{N}\cup\left\{ 0\right\} $ we fix $\xi^{\left(n\right)}\in I^{\left(n+1,\,\infty\right)}$
and from this we define a sequence $x^{\left(n\right)}\in J_{n}$
as $x^{\left(n\right)}=\pi_{n}\left(\xi^{\left(n\right)}\right).$
This implies that the balls $B_{\omega}=B\left(\varphi_{\omega}^{n}\left(x^{\left(n\right)}\right),\,e^{-S_{n}\beta}\right)$
are centered at a point in $J$.

Furthermore, we make the following assumptions:
\begin{itemize}
\item For all $n\in\mathbb{N}$ and all $j\in I^{\left(n\right)},$ $\varphi_{j}^{\left(n\right)}$
is injective.
\item \emph{\uline{Open Set Condition (OSC)}}: For all $n\in\mathbb{N}$,
and for all $i,\,j\in I^{\left(n\right)}$, $i\ne j$, 
\[
\varphi_{\left(j\right)}^{n}\left(\text{int}\left(X\right)\right)\cap\varphi_{\left(i\right)}^{n}\left(\text{int}\left(X\right)\right)=\emptyset.
\]
\item \emph{\uline{Uniformly contracting condition (UCC)}}: Assume that
for some $\theta>0$ we have that $\overline{\kappa}_{\left(k\right)}\le e^{-\theta},$
for all $k$.
\item \emph{\uline{Exponentially shrinking condition (ESC)}}: We assume
that there exist numbers $\overline{\alpha}$ and $\underline{\alpha}$
such that
\begin{equation}
0<\underline{\alpha}\le\beta_{k}\left(\xi\right)+\log\underline{\kappa}_{\left(k\right)}\le\beta_{k}\left(\xi\right)+\log\overline{\kappa}_{\left(k\right)}\le\overline{\alpha},\label{eq:ESC}
\end{equation}
for all $k$ and all $\xi\in I^{\left(k,\infty\right)}$. It is easy
to check that 
\[
0<n\underline{\alpha}\le S_{n}\beta\left(\xi\right)+\log\underline{\kappa}_{n}\le S_{n}\beta\left(\xi\right)+\log\overline{\kappa}_{n}\le n\overline{\alpha},
\]
for all $n$ and all $\xi\in I^{\infty}$.
\item \emph{\uline{Non-empty quasi middle (NEQ):}}\emph{ }Recall that
for a set $A$ in a metric space and $\epsilon>0$, the $\epsilon$-thickening
of $A$ is 
\[
B\left(A,\,\varepsilon\right)=\bigcup_{x\in A}B\left(x,\,\varepsilon\right).
\]
Now let 
\[
X_{\varepsilon}:=X\backslash B\left(\mathbb{R}^{d}\backslash X,\,\varepsilon\right).
\]
We assume that there exists $\epsilon>0$ for which
\begin{equation}
J_{n}\cap X_{\varepsilon}\neq\emptyset,\text{ for all }n.\label{eq:NEQ}
\end{equation}
 Hence, assuming the NEQ condition we can choose the point $x^{\left(n\right)}$
appearing in the definition of the balls $B_{\omega}$ to be in $J_{n}\cap X_{\varepsilon}.$ 
\item \emph{\uline{Linear Variation Condition (LVC):}} The sequence $\left(\beta_{n}\right)$
is said have the linear variation condition if 
\[
\lim_{n\to\infty}\frac{1}{n}\left(\sup_{\xi\in I^{\infty}}S_{n}\beta\left(\xi\right)-\inf_{\overline{\xi}\in I^{\infty}}S_{n}\beta\left(\overline{\xi}\right)\right)=0.
\]
We note that this condition implies that for all $\varepsilon>0$
there exists $N_{\varepsilon}\ge1$ such that for all $n\ge N_{\varepsilon}$
and all $\xi,\,\overline{\xi}\in I^{\infty}$ we have that 
\begin{equation}
\exp\left\{ -S_{n}\beta\left(\xi\right)-\varepsilon n\right\} \le\exp\left\{ S_{n}\beta\left(\overline{\xi}\right)\right\} \le\exp\left\{ -S_{n}\beta\left(\overline{\xi}\right)+\varepsilon n\right\} .\label{eq:LVC}
\end{equation}
\item \emph{\uline{Bounded distortion property (BDP):}} We assume that
there exists $K\ge1$ such that for every $n\in\mathbb{N},$ every
$\omega\in I^{n},$ and every $x,\,y\in X$,
\[
\left|D\varphi_{\omega}^{n}\left(x\right)\right|\le K\left|D\varphi_{\omega}^{n}\left(y\right)\right|.
\]
\end{itemize}
It should be noted that a sufficient condition for BDP, one in terms
of the maps $\varphi_{a}^{\left(n\right)}$ and not in terms of the
composition $\varphi_{\omega}^{n}$, is if there exists $\alpha>0$
such that
\[
\left|\frac{\left|D\varphi_{a}^{\left(n\right)}\left(x\right)\right|}{\left|D\varphi_{a}^{\left(n\right)}\left(y\right)\right|}-1\right|\le K\left|x-y\right|^{\alpha},
\]
for all $x,\,y\in X$, all $n\in\mathbb{N},$ and all $a\in I^{\left(n\right)}.$

Let us now examine some consequences of a conformal nonautonomous
IFS having these properties. First we note that ESC and UCC imply
that the radii of $B_{\omega}$ decay exponentially fast; Indeed $e^{-S_{n}\beta\left(\omega\xi^{\left(n\right)}\right)}\le\underline{\kappa}_{n}e^{-n\theta}.$

One geometric consequence of BDP is that for every ball $B\left(x,\,r\right)\subseteq X$,
for all $n\in\mathbb{N},$ and for all $\omega\in I^{n},$ we have
that
\[
B\left(\varphi_{\omega}^{n}\left(x\right),\,K^{-1}\left\Vert D\varphi_{\omega}^{n}\right\Vert r\right)\subseteq\varphi_{\omega}^{n}\left(B\left(x,\,r\right)\right)\subseteq B\left(\varphi_{\omega}^{n}\left(x\right),\,K\left\Vert D\varphi_{\omega}^{n}\right\Vert r\right).
\]

For a proof of this fact see, for instance, \cite{mauldin_dimensions_1996}.

We remark that conformality implies the Bounded Distortion Property
whenever $d\ge2.$ For $d=2$ this follows from Koebe's distortion
theorem \cite{pommerenke_boundary_1992}, and for $d\ge3$ it is a
consequence of Liouville's theorem for conformal maps \cite{blair_inversion_2000}.

Another consequence of ESC, NEQ, and BDP is the following
\begin{claim}
\label{claim:balls in images}For all $\omega\in I^{n}$, and all
$n$ large enough, we have that $B_{\omega}\subset\varphi_{\omega}^{n}\left(X\right).$
\end{claim}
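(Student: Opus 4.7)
The plan is to combine three of the standing hypotheses: NEQ supplies a ball around $x^{(n)}$ that fits inside $X$, BDP pushes this ball forward to a ball inscribed in $\varphi_\omega^n(X)$, and ESC (together with UCC, which forces $\underline{\alpha}>0$) guarantees that the radius of $B_\omega$ decays faster than the radius of this inscribed ball.

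Concretely, since we chose $x^{(n)}\in J_n\cap X_\varepsilon$ using NEQ, the ball $B(x^{(n)},\varepsilon)$ is contained in $X$. Applying the geometric consequence of BDP recalled just above the claim to this ball yields
\[
B\!\left(\varphi_\omega^n(x^{(n)}),\,K^{-1}\|D\varphi_\omega^n\|\,\varepsilon\right)\subseteq \varphi_\omega^n\!\left(B(x^{(n)},\varepsilon)\right)\subseteq \varphi_\omega^n(X).
\]
Since $B_\omega$ is centered at $\varphi_\omega^n(x^{(n)})$, it therefore suffices to verify the radius comparison
\[
e^{-S_n\beta(\omega\xi^{(n)})}\le K^{-1}\|D\varphi_\omega^n\|\,\varepsilon
\]
for every $\omega\in I^n$ and every sufficiently large $n$.

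For the right hand side, the definition of $\underline{\kappa}_n$ as a minimum over $\omega\in I^n$ gives $\|D\varphi_\omega^n\|\ge \underline{\kappa}_n$. For the left hand side, the ESC bound $S_n\beta(\xi)+\log\underline{\kappa}_n\ge n\underline{\alpha}$ rearranges to $e^{-S_n\beta(\omega\xi^{(n)})}\le \underline{\kappa}_n e^{-n\underline{\alpha}}$. Putting these together,
\[
\frac{e^{-S_n\beta(\omega\xi^{(n)})}}{K^{-1}\|D\varphi_\omega^n\|\,\varepsilon}\;\le\;\frac{\underline{\kappa}_n\,e^{-n\underline{\alpha}}}{K^{-1}\,\underline{\kappa}_n\,\varepsilon}\;=\;\frac{K}{\varepsilon}\,e^{-n\underline{\alpha}},
\]
and because $\underline{\alpha}>0$ by ESC this quantity is less than $1$ once $n\ge N_0$, where $N_0$ depends only on $K$, $\varepsilon$ and $\underline{\alpha}$ (and in particular not on $\omega$). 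This yields the desired inclusion uniformly in $\omega\in I^n$.

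There is no real obstacle here beyond bookkeeping: the only subtlety is that the uniformity in $\omega$ comes from both $\underline{\kappa}_n$ cancelling and the ESC bound being independent of $\xi$, so the threshold $N_0$ can be chosen once and for all.
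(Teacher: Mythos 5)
Your proof is correct and follows essentially the same route as the paper's: NEQ to inscribe $B(x^{(n)},\varepsilon)$ in $X$, the BDP geometric consequence to push it forward, then $\|D\varphi_\omega^n\|\ge\underline{\kappa}_n$ together with the ESC bound $e^{-S_n\beta(\omega\xi^{(n)})}\le\underline{\kappa}_n e^{-n\underline{\alpha}}$ to close the radius comparison uniformly in $\omega$. The only cosmetic difference is that you argue via a ratio tending to zero while the paper writes down an explicit (looser) threshold for $n$.
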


\begin{proof}
Notice that the center of the ball $B_{\omega}$ is contained in $\varphi_{\omega}^{n}\left(X_{\varepsilon}\right),$
by condition NEQ. Now,
\begin{align*}
\varphi_{\omega}^{n}\left(X\right) & \supseteq\varphi_{\omega}^{n}\left(B\left(x^{\left(n\right)},\,\varepsilon\right)\right)\\
 & \supseteq B\left(\varphi_{\omega}^{n}\left(x^{\left(n\right)}\right),\,K^{-1}\left\Vert D\varphi_{\omega}^{n}\right\Vert \varepsilon\right)\\
 & \supseteq B\left(\varphi_{\omega}^{n}\left(x^{\left(n\right)}\right),\,K^{-1}\underline{\kappa}_{n}\varepsilon\right).
\end{align*}
Thus, it suffices to show that $K^{-1}\underline{\kappa}_{n}\varepsilon\ge e^{-S_{n}\beta\left(\omega\xi^{\left(n\right)}\right)}$
for all $\omega\in I^{n}.$ Given condition ESC notice that the desired
inequality holds for all $n\ge K\left(\varepsilon\underbar{\ensuremath{\alpha}}\right)^{-1}.$
\end{proof}
Recall that a measure $\mu_{h}$ is $h$-Ahlfors regular if there
exists a constant $C\ge1$ such that 
\begin{equation}
C^{-1}\le\frac{\mu_{h}\left(B\left(x,\,r\right)\right)}{r^{h}}\le C,\label{eq:Ahlfors}
\end{equation}
for all $x\in\text{supp}\left(\mu_{h}\right)$ and all $0<r\le1.$

We establish the lower bound of the Hausdorff dimension under different
sets of assumptions. For this purpose we appeal to the celebrated
Frostmann Lemma \cite{falconer_techniques_1997}.
\begin{lem}
(Frostmann) Let $m$ be a Borel probability measure on $X$. If there
exist constants $C>0$ and $t\ge0$ such that for all $x\in X$ and
all $r>0$
\[
m\left(B\left(x,\,r\right)\right)\le Cr^{t},
\]
then $\text{HD}\left(\text{supp}\left(m\right)\right)\ge t$.
\end{lem}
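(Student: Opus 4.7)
The plan is to apply what is classically called the mass distribution principle: I would show directly that $H^t(\text{supp}(m)) \ge m(X)/C = 1/C > 0$, from which $\text{HD}(\text{supp}(m)) \ge t$ follows immediately by the definition of Hausdorff dimension as $\inf\{s \ge 0 \mid H^s(\text{supp}(m)) = 0\}$ recalled in Section \ref{sec:Upper-bound}.

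To carry this out, I would fix $\delta > 0$ and let $(U_k)_{k \ge 1}$ be an arbitrary $\delta$-cover of $\text{supp}(m)$. First I would discard any $U_k$ that does not meet $\text{supp}(m)$, since such sets contribute $m(U_k) = 0$ and their removal does not affect the cover of $\text{supp}(m)$, while only making the sum $\sum_k [\text{diam}(U_k)]^t$ smaller. For each remaining $U_k$, I would pick a point $x_k \in U_k \cap \text{supp}(m)$; then $U_k \subseteq B(x_k, \text{diam}(U_k))$, so the hypothesis yields
\[
m(U_k) \le m\bigl(B(x_k,\text{diam}(U_k))\bigr) \le C\,[\text{diam}(U_k)]^t.
\]
Summing over $k$ and using that $m$ is a probability measure concentrated on $\text{supp}(m)$,
\[
1 = m(X) = m(\text{supp}(m)) \le \sum_k m(U_k) \le C \sum_k [\text{diam}(U_k)]^t.
\]
Taking the infimum over all $\delta$-covers yields $\inf \sum_k [\text{diam}(U_k)]^t \ge 1/C$ for every $\delta > 0$, and then letting $\delta \to 0$ gives $H^t(\text{supp}(m)) \ge 1/C > 0$. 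Hence $\text{HD}(\text{supp}(m)) \ge t$.

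There is essentially no obstacle here: the argument is the classical mass distribution principle, and the only minor care needed is the cosmetic step of removing from the cover those sets that miss $\text{supp}(m)$, so that centers of comparison balls can be chosen on the support. The hypothesis $m(B(x,r)) \le Cr^t$ is applied directly, and it is used only with arbitrary centers (one per cover element), which fits the hypothesis since it is assumed to hold for every $x \in X$.
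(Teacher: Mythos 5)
The paper does not prove this lemma; it states it with a citation to Falconer's \emph{Techniques in Fractal Geometry}, so there is no in-paper proof to compare against. Your argument is the standard mass distribution principle and is essentially correct. The only technical wrinkle worth tightening is the containment $U_k \subseteq B(x_k, \text{diam}(U_k))$: with open balls, a point of $U_k$ at distance exactly $\text{diam}(U_k)$ from $x_k$ would be excluded. This is harmless and is fixed in one of the usual ways, e.g.\ by writing $U_k \subseteq B\bigl(x_k, \text{diam}(U_k)+\varepsilon\bigr)$ for each $\varepsilon>0$ and letting $\varepsilon\to 0$ in the resulting bound $m(U_k)\le C(\text{diam}(U_k)+\varepsilon)^t$, or simply by enlarging the radius by a factor of $2$, which only changes the constant and still yields $H^t(\text{supp}(m))\ge (2^tC)^{-1}>0$. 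Your preliminary step of discarding cover elements that miss the support, so that the comparison ball's center can be chosen in $\text{supp}(m)$, is exactly the right bookkeeping; note, however, that the hypothesis already holds for all $x\in X$, so that step is in fact optional here, though it is the right reflex in the more common formulation where the ball estimate is assumed only for $x\in\text{supp}(m)$.
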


\begin{thm}
\label{thm:boundedwithgaps} Let $\Phi$ be a nonautonomous conformal
IFS on a compact, convex set $X\subseteq\mathbb{R}^{d}$ with nonempty
interior satisfying OSC, ESC, UCC, LVC, and NEQ conditions. Suppose
that the sequence $\left(\frac{\overline{\kappa}_{n}}{\underline{\kappa}_{n}}\right)$
is bounded and that there exists an $h$-Ahlfors measure, $\mu_{h}$,
where $h=\text{HD}\left(J\right),$ and $\text{supp}\left(\mu_{h}\right)=J.$
Then $\text{HD}\left(\mathscr{D}\right)=b.$ 
\end{thm}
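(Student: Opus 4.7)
The upper bound $\text{HD}(\mathscr{D}) \le b$ is exactly Theorem \ref{thm:upper bound}, so the task reduces to proving the matching lower bound. The plan is to invoke Frostman's lemma: for each $t < b$ I will construct a Borel probability measure $m$ with $\text{supp}(m) \subseteq \mathscr{D}$ satisfying $m(B(x,r)) \le C_t\, r^t$ for every ball; this yields $\text{HD}(\mathscr{D}) \ge t$, and letting $t \nearrow b$ gives the theorem. Since $t < b$ we have $\overline{P}_\beta(t) > 0$, and I extract a very sparse subsequence $(n_k)_{k \ge 1}$ with $n_{k+1}/n_k \to \infty$ along which
$$Z_{n_k} \;:=\; \sum_{\omega \in I^{n_k}} e^{-tS_{n_k}\beta(\omega\xi^{(n_k)})} \;\ge\; e^{n_k \overline{P}_\beta(t)/2}.$$
The boundedness of $\overline{\kappa}_n/\underline{\kappa}_n$ together with LVC ensures that, as $\omega$ ranges over $I^{n_k}$, both the radius $r_\omega = e^{-S_{n_k}\beta(\omega\xi^{(n_k)})}$ and the weight $e^{-tS_{n_k}\beta(\omega\xi^{(n_k)})}$ are comparable, up to a factor $e^{\pm\epsilon n_k}$, to a common scale $r_k$.

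I then build a Cantor scheme. Let $\mathcal{F}_1 = \{B_\omega : \omega \in I^{n_1}\}$; this is a pairwise disjoint family for $n_1$ large by Claim \ref{claim:balls in images} together with OSC applied to the enclosing images $\varphi_\omega^{n_1}(X)$. Given $B_\omega \in \mathcal{F}_k$, set
$$\mathcal{C}(\omega) = \{\omega\tau \in I^{n_{k+1}} : B_{\omega\tau} \subseteq B_\omega\}, \qquad \mathcal{F}_{k+1} = \bigcup_{B_\omega \in \mathcal{F}_k} \{B_{\omega\tau} : \omega\tau \in \mathcal{C}(\omega)\}.$$
Starting with $m(B_\omega) = e^{-tS_{n_1}\beta(\omega\xi^{(n_1)})}/Z_{n_1}$ on $\mathcal{F}_1$, define recursively for $\omega\tau \in \mathcal{C}(\omega)$,
$$m(B_{\omega\tau}) \;=\; m(B_\omega)\cdot\frac{e^{-tS_{n_{k+1}}\beta(\omega\tau\xi^{(n_{k+1})})}}{\sum_{\omega\tau' \in \mathcal{C}(\omega)} e^{-tS_{n_{k+1}}\beta(\omega\tau'\xi^{(n_{k+1})})}},$$
and within each ball of $\mathcal{F}_k$ spread the mass proportionally to $\mu_h$, renormalised. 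The weak limit $m$ is a Borel probability measure supported on $\bigcap_k \bigcup\mathcal{F}_k \subseteq \mathscr{D}$.

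The crux---and the main obstacle---is the uniform non-loss estimate
$$\sum_{\tau:\,\omega\tau \in \mathcal{C}(\omega)} e^{-tS_{n_{k+1}}\beta(\omega\tau\xi^{(n_{k+1})})} \;\ge\; c_0 \sum_{\tau \in I^{(n_k+1,\,n_{k+1})}} e^{-tS_{n_{k+1}}\beta(\omega\tau\xi^{(n_{k+1})})},$$
with $c_0 > 0$ independent of $\omega$ and $k$. Here the $h$-Ahlfors hypothesis is decisive: by BDP the push-forward of $\mu_h$ under $\varphi_\omega^{n_k}$ is comparable, with uniform constants, to $\mu_h$ restricted to $\varphi_\omega^{n_k}(J_{n_k})$; since $\mu_h(B_\omega \cap J) \asymp r_\omega^h$ and $\mu_h(\varphi_\omega^{n_k}(X) \cap J) \asymp \underline{\kappa}_{n_k}^h$, the $\mu_h$-fraction of grand-centres $\varphi_{\omega\tau}^{n_{k+1}}(x^{(n_{k+1})})$ landing in $B_\omega$ is a uniform multiple of $e^{-n_k \underline{\alpha} h}$, and converting this geometric fraction into a weighted-sum bound via UCC and LVC produces the required $c_0$.

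Finally, Frostman is verified as follows. By the non-loss estimate the recursive definition telescopes to
$$m(B_\omega) \;\le\; C c_0^{-k}\,\frac{e^{-tS_{n_k}\beta(\omega\xi^{(n_k)})}}{Z_{n_k}} \;\le\; C c_0^{-k}\, r_k^t\, e^{-n_k \overline{P}_\beta(t)/3}$$
for each $B_\omega \in \mathcal{F}_k$; when $n_{k+1}/n_k$ grows fast enough, the factor $c_0^{-k}$ is swamped by the exponential and the bound simplifies to $m(B_\omega) \le C\, r_k^{t+\delta}$ for some $\delta > 0$. For an arbitrary ball $B(x,r)$, locate the unique $k$ with $r_{k+1} \le r \le r_k$. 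OSC-disjointness and uniform geometry of the images $\varphi_\omega^{n_k}(X)$ bound the number of $\mathcal{F}_k$-balls meeting $B(x,r)$ by a constant, while within a single $B_\omega \in \mathcal{F}_k$ the number of $\mathcal{F}_{k+1}$-children meeting $B(x,r)$ is $O((r/r_{k+1})^h)$ by Ahlfors packing of $J$. Using also that $b \le h$---a consequence of the Ahlfors hypothesis, since $\sum_\omega r_\omega^h \le \mu_h(J) < \infty$ forces $\overline{P}_\beta(h) \le 0$---a standard calculation yields $m(B(x,r)) \le C\, r^t$. Frostman's lemma then gives $\text{HD}(\mathscr{D}) \ge t$, and letting $t \nearrow b$ completes the proof.
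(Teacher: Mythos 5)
Your overall framework---Frostman's lemma applied to a mass distribution built along a sparse subsequence $(n_k)$---is the same as the paper's, and some of your side remarks are sound (for instance, the observation that $b\le h$ follows from OSC-disjointness of the $B_\omega$ together with the Ahlfors lower bound is a clean and correct argument). However, there is a genuine gap at exactly the step you flag as the crux. The \emph{non-loss estimate} with a constant $c_0>0$ independent of $k$ is false, and your own justification shows why: by the Ahlfors property, the fraction of children $\omega\tau$ whose cylinders $\varphi_{\omega\tau}^{n_{k+1}}(X)$ fall inside $B_\omega$ is comparable to $\mu_h\left(B_\omega\right)/\mu_h\left(\varphi_\omega^{n_k}(X)\right)\asymp\left(e^{-S_{n_k}\beta}/\underline{\kappa}_{n_k}\right)^h\asymp e^{-n_k\underline{\alpha}h}$, which decays exponentially in $n_k$. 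You correctly compute this factor but then declare it "produces the required $c_0$," which contradicts $c_0$ being uniform in $k$. The paper does not pretend this loss is constant: Claim \ref{claim:increasingsubsequence} and inequality (\ref{eq:lowerboundR}) track precisely the $k$-dependent count $\#R_{l+1}\left(\omega\right)\ge C^{-1}\left(e^{-S_{n_l}\beta}/\overline{\kappa}_{n_{l+1}}\right)^h$, and the resulting accumulated factor is then absorbed using the quantitative growth condition (\ref{eq:incsubseq1}) together with the positive pressure. Merely requiring $n_{k+1}/n_k\to\infty$ does not by itself yield the Frostman bound; one needs the precise bookkeeping of (\ref{eq:incsubseq1}) (or, in the setting of Theorem \ref{thm:unboundedwithgaps}, of (\ref{eq:subseq}) and (\ref{eq:subexponential})).

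A secondary but related issue: the telescoping inequality $m\left(B_\omega\right)\le Cc_0^{-k}e^{-tS_{n_k}\beta}/Z_{n_k}$ does not follow directly from your recursion even if $c_0$ were constant. The denominators $\sum_{\omega\tau'\in\mathcal{C}(\omega)}e^{-tS_{n_{k+1}}\beta}$ depend on $\omega$ and do not automatically cancel against the numerators at the previous level; making them do so requires invoking LVC to factor $e^{-tS_{n_{k+1}}\beta\left(\omega\tau\xi^{(n_{k+1})}\right)}$ through $e^{-tS_{n_k}\beta\left(\omega\xi^{(n_k)}\right)}$ up to $e^{\pm\varepsilon n_k}$-errors, and those errors must themselves be swept into the pressure margin. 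The paper sidesteps this entirely by distributing mass \emph{equally} among children, formula (\ref{eq:mass}), which makes $m\left(B_\tau\right)$ depend only on the level $l$ and not on $\tau$; this uniformity is then used explicitly in the Frostman verification. Your weight-proportional scheme could in principle work, but as written the two hard steps---controlling the exponentially decaying survival fraction, and making the telescoping rigorous---are asserted rather than proved.
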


\begin{proof}
Recall that $\text{HD}\left(\mathscr{D}\right)\le b$ has been proven
in Theorem \ref{thm:upper bound}. Let $0<t<b$. Our strategy consists
of constructing a measure $m$ supported on a set $K\subseteq\mathscr{D}$
satisfying the hypothesis of the Frostmann Lemma with exponent $t$.
Choose an increasing sequence $\left(n_{l}\right)\in\mathbb{N}^{\mathbb{N}}$
such that
\begin{equation}
\overline{P}_{\beta}\left(t\right)=\lim_{l\to\infty}\frac{1}{n_{l}}\log\sum_{\omega\in I^{n}}e^{-tS_{n_{l}}\beta\left(\omega\xi^{\left(n\right)}\right)}.\label{eq:subsequence}
\end{equation}

If necessary, we refine our subsequence so that it satisfies the following
inequality for all $l$:
\begin{equation}
n_{l+1}\ge\frac{4h}{P\left(t\right)}\left(\text{const}+\overline{\alpha}\sum_{k=1}^{l}n_{k}\right).\label{eq:incsubseq1}
\end{equation}
Now define $R_{1}=I^{n_{1}}$. Assuming $R_{l}\subseteq I^{n_{l}}$
has been defined, for every $\omega\in R_{l}$ let 
\begin{align*}
R_{l+1}\left(\omega\right) & :=\left\{ \tau\in I^{n_{l+1}}\mid B_{\tau}\subset B_{\omega}\right\} .\\
R_{l+1} & :=\bigcup_{\omega\in R_{l}}R_{l+1}\left(\omega\right).
\end{align*}

Now we will focus on obtaining a lower bound on the cardinality of
the sets $R_{l+1}\left(\omega\right)$. We denote $B\left(\varphi_{\omega}^{n_{l}}\left(x^{\left(n_{l}\right)}\right),\,\frac{1}{2}e^{-S_{n_{l}}\beta\left(\omega\xi^{\left(n\right)}\right)}\right)$
by $\frac{1}{2}B_{\omega}$.
\begin{claim}
\label{claim:increasingsubsequence}Let $\tau\in I^{n_{l+1}}$ and
$\omega\in I^{n_{l}}$. If $n_{l+1}\ge\theta^{-1}\left[\log\left(2\right)+n_{l}\left(\overline{\alpha}+\theta\right)\right]$
then either 
\[
\varphi_{\tau}^{n_{l+1}}\left(X\right)\cap\frac{1}{2}B_{\omega}=\emptyset
\]
 or 
\[
\varphi_{\tau}^{n_{l+1}}\left(X\right)\subseteq B_{\omega}.
\]
\end{claim}

\begin{proof}
[Proof of Claim] Assume $\varphi_{\tau}^{n_{l+1}}\left(X\right)\cap\frac{1}{2}B_{\omega}\neq\emptyset$.
It suffices to show that $4\left|\varphi_{\tau}^{n_{l+1}}\left(X\right)\right|\le\left|B_{\omega}\right|$.
Indeed, 
\begin{align*}
4\left|\varphi_{\tau}^{n_{l+1}}\left(X\right)\right|\le\left|B_{\omega}\right| & \Longleftarrow2\overline{\kappa}_{n_{l+1}}\le e^{-S_{n_{l}}\beta\left(\omega\xi^{\left(n\right)}\right)}\\
 & \Longleftarrow S_{n_{l}}\beta\left(\omega\xi^{\left(n\right)}\right)+\log\overline{\kappa}_{n_{l+1}}\le-\log2\\
 & \Longleftarrow S_{n_{l}}\beta\left(\omega\xi^{\left(n\right)}\right)+\log\overline{\kappa}_{n_{l}}+\sum_{j=n_{l}+1}^{n_{l+1}}\log\overline{\kappa}_{\left(j\right)}\le-\log2\\
 & \Longleftarrow n_{l}\overline{\alpha}+\sum_{j=n_{l}+1}^{n_{l+1}}\log\overline{\kappa}_{\left(j\right)}\le-\log2\\
 & \Longleftarrow n_{l}\overline{\alpha}-\sum_{j=n_{l}+1}^{n_{l+1}}\theta\le-\log2\\
 & \Longleftarrow n_{l}\overline{\alpha}-\left(n_{l+1}-n_{l}\right)\theta\le-\log2\\
 & \Longleftarrow n_{l+1}\ge\theta^{-1}\left[\log\left(2\right)+n_{l}\left(\overline{\alpha}+\theta\right)\right],
\end{align*}
where the 3rd, 4rd, and 5th implications follow from (\ref{eq:kappa products}),
ESC, and UCC, respectively. This proves the Claim.
\end{proof}
From the Ahlfors property of $\mu_{h}$ we get that for all $\omega\in R_{l}$
\begin{align*}
C^{-1}\left(\frac{1}{2}e^{-S_{n_{l}}\beta\left(\omega\xi^{\left(n_{l}\right)}\right)}\right)^{h} & \le\mu_{h}\left(\frac{1}{2}B_{\omega}\right)\\
 & \le\#\left\{ \tau\in I^{n_{l+1}}\mid\varphi_{\tau}^{n_{l+1}}\left(X\right)\cap\frac{1}{2}B_{\omega}\neq\emptyset\right\} \max_{\tau\in I^{n_{l+1}}}\mu_{h}\left(\varphi_{\tau}^{n_{l+1}}\left(X\right)\right)\\
 & =\#\left\{ \tau\in I^{n_{l+1}}\mid\varphi_{\tau}^{n_{l+1}}\left(X\right)\subset B_{\omega}\right\} \max_{\tau\in I^{n_{l+1}}}\mu_{h}\left(\varphi_{\tau}^{n_{l+1}}\left(X\right)\right)\\
 & \le\#R_{l+1}\left(\omega\right)\max_{\tau\in I^{n_{l+1}}}\mu_{h}\left(\varphi_{\tau}^{n_{l+1}}\left(X\right)\right)\\
 & \le\#R_{l+1}\left(\omega\right)C\overline{\kappa}_{n_{l+1}}^{h},
\end{align*}
where the equation above follows from Claim \ref{claim:increasingsubsequence}.
Therefore, we obtain that 
\[
\#R_{l+1}\left(\omega\right)\ge C^{-2}\left(\frac{e^{-S_{n_{l}}\beta\left(\omega\xi^{\left(n_{l}\right)}\right)}}{2\overline{\kappa}_{n_{l+1}}}\right)^{h}.
\]
By redefining the constant $C$ we will write
\begin{equation}
\#R_{l+1}\left(\omega\right)\ge C^{-1}\left(\frac{e^{-S_{n_{l}}\beta\left(\omega\xi^{\left(n_{l}\right)}\right)}}{\overline{\kappa}_{n_{l+1}}}\right)^{h}.\label{eq:lowerboundR}
\end{equation}
Notice that $R_{l+1}\left(\omega\right)\neq\emptyset$ if we choose
our subsequence $\left(n_{l}\right)$ to increase rapidly enough;
indeed,
\begin{align*}
\#R_{l+1}\left(\omega\right)\ge1 & \Longleftarrow C^{-1}\left(\frac{e^{-S_{n_{l}}\beta\left(\omega\xi^{\left(n_{l}\right)}\right)}}{\overline{\kappa}_{n_{l+1}}}\right)^{h}\ge1\\
 & \Longleftarrow\overline{\kappa}_{n_{l+1}}\le C^{-\nicefrac{1}{h}}e^{-S_{n_{l}}\beta\left(\omega\xi^{\left(n_{l}\right)}\right)}\\
 & \Longleftarrow\overline{\kappa}_{n_{l}}\prod_{k=n_{l}+1}^{n_{l+1}}\overline{\kappa}_{\left(k\right)}\le C^{-\nicefrac{1}{h}}e^{-S_{n_{l}}\beta\left(\omega\xi^{\left(n_{l}\right)}\right)}\\
 & \Longleftarrow e^{-\left(n_{l+1}-n_{l}\right)\theta}\le C^{-\nicefrac{1}{h}}\overline{\kappa}_{n_{l}}^{-1}e^{-S_{n_{l}}\beta\left(\omega\xi^{\left(n_{l}\right)}\right)}\\
 & \Longleftarrow e^{-\left(n_{l+1}-n_{l}\right)\theta}\le C^{-\nicefrac{1}{h}}e^{-n_{l}\overline{\alpha}}\\
 & \Longleftarrow\left(n_{l+1}-n_{l}\right)\theta\ge\frac{1}{h}\log\left(C\right)+n_{l}\overline{\alpha}\\
 & \Longleftarrow n_{l+1}\ge\frac{1}{\theta}\left[\frac{1}{h}\log\left(C\right)+n_{l}\left(\theta+\overline{\alpha}\right)\right].
\end{align*}
Now for every $\omega\in R_{1}$ define 
\[
m_{1}\left(B_{\omega}\right)=\left(\#R_{1}\right)^{-1}.
\]
 Assuming that $m_{l}\left(B_{\omega}\right)$ has been defined for
every $\omega\in R_{l}$ we now define for every $\tau\in R_{l+1}\left(\omega\right)$
\begin{align}
m_{l+1}\left(B_{\tau}\right) & =\frac{m_{l}\left(B_{\omega}\right)}{\#R_{l+1}\left(\omega\right)}\\
 & =\left[\prod_{k=1}^{l}\left(\#R_{k+1}\left(\omega\vert_{n_{k}}\right)\right)^{-1}\right]\left(\#R_{1}\right)^{-1}.\label{eq:mass}
\end{align}
We can extend the functions $m_{l}$ to a measure on $X$ and let
us take a weak limit $m$ of the sequence $\left(m_{l}\right)$. The
function $m$ is then a Borel probability measure. Furthermore, notice
that $\text{supp}\left(m\right)\subset\text{supp}\left(m_{l}\right)=\bigcup_{\omega\in R_{l}}B_{\omega}\left(X\right)$
for all $l.$ This implies that 
\begin{align*}
K: & =\text{supp}\left(m\right)\\
 & =\bigcap_{l\ge1}\bigcup_{\omega\in R_{l}}B_{\omega}\left(X\right)\\
 & \subset\mathscr{D}.
\end{align*}
Hence, for $\tau\in R_{l+1}$ we have that $m\left(B_{\tau}\right)=m_{l+1}\left(B_{\tau}\right).$
Furthermore, from $R_{l}\neq\emptyset$ it follows that $K\ne\emptyset$.

For $\tau\in R_{l+1}\left(\omega\right)$, the inequality (\ref{eq:lowerboundR})
yields the following estimate for $m\left(B_{\tau}\right):$
\begin{align*}
m\left(B_{\tau}\right) & \le\left[\prod_{k=1}^{l}\left(\#R_{k+1}\left(\omega\vert_{n_{k}}\right)\right)^{-1}\right]\left(\#R_{1}\right)^{-1}\\
 & \le\prod_{k=1}^{l}C\left(\frac{e^{-S_{n_{k}}\beta\left(\omega\vert_{n_{k}}\xi^{\left(n_{k}\right)}\right)}}{\overline{\kappa}_{n_{k+1}}}\right)^{-h}\\
 & =C^{l}\prod_{k=1}^{l}\overline{\kappa}_{n_{k+1}}^{h}e^{hS_{n_{k}}\beta\left(\omega\vert_{n_{k}}\xi^{\left(n_{k}\right)}\right)}.
\end{align*}
Now consider $x\in K$ and a number $r$ such that $0<r<\overline{\kappa}_{n_{1}}e^{-n_{1}\overline{\alpha}}\le\min\left\{ e^{-S_{n_{1}}\beta\left(\omega\xi^{\left(n_{1}\right)}\right)}\colon\omega\in I^{n_{1}}\right\} $.
Let 
\[
\ell\left(r\right):=\min_{l\in\mathbb{N}}\left\{ l\mid\max_{\tau\in R_{l+1}}e^{-S_{n_{l+1}}\beta\left(\tau\xi^{\left(n_{l+1}\right)}\right)}\le r\right\} ,
\]
and 
\[
\#_{\ell\left(r\right)+1}:=\#\left\{ \tau\in R_{\ell\left(r\right)+1}\mid B_{\tau}\cap B\left(x,\,r\right)\neq\emptyset\right\} .
\]

Since $x\in K\subset\bigcup_{\tau\in R_{\ell\left(r\right)+1}}B_{\tau}$
it follows that $\left|x-\varphi_{\tau}^{n_{\ell\left(r\right)+1}}\left(x^{\left(n_{l}\right)}\right)\right|\le e^{-S_{n_{\ell\left(r\right)+1}}\left(\tau\xi^{\left(n_{\ell\left(r\right)+1}\right)}\right)}\le r$
for some $\tau\in R_{\ell\left(r\right)+1}$. This implies that $\varphi_{\tau}^{n_{\ell\left(r\right)+1}}\left(x^{\left(n_{l}\right)}\right)\in B\left(x,\,r\right)$
and it follows that $\#_{\ell\left(r\right)+1}\ge1$.

Recall that $m$ is supported on $K\subset\bigcup_{\tau\in R_{\ell\left(r\right)+1}}B_{\tau}$
and that $m\left(B_{\omega}\right)=m\left(B_{\overline{\omega}}\right)$
for all words $\omega,\,\overline{\omega}\in R_{l}$ of the same length,
so for all $\tau\in R_{\ell\left(r\right)+1}$ we have that

\begin{align*}
m\left(B\left(x,\,r\right)\right) & \le\#_{\ell\left(r\right)+1}\max_{\overline{\tau}\in R_{\ell\left(r\right)+1}}m\left(B_{\overline{\tau}}\right)\\
 & =\#_{\ell\left(r\right)+1}m\left(B_{\tau}\right)\\
 & \le\#_{\ell\left(r\right)+1}C^{\ell\left(r\right)}\prod_{k=1}^{\ell\left(r\right)}\overline{\kappa}_{n_{k+1}}^{h}e^{hS_{n_{k}}\beta\left(\tau\vert_{n_{k}}\xi^{\left(n_{k}\right)}\right)}\\
 & =\#_{\ell\left(r\right)+1}C^{\ell\left(r\right)}\exp\left\{ h\sum_{k=1}^{\ell\left(r\right)}S_{n_{k}}\beta\left(\tau\vert_{n_{k}}\xi^{\left(n_{k}\right)}\right)\right\} \prod_{k=1}^{\ell\left(r\right)}\overline{\kappa}_{n_{k+1}}^{h}.
\end{align*}
We will use the following upper bound for $\#_{\ell\left(r\right)+1}$.
\begin{claim}
$\#_{\ell\left(r\right)+1}\le C\left(\frac{r}{\underline{\kappa}_{n_{\ell\left(r\right)+1}}}\right)^{h}$. 
\end{claim}

\begin{proof}
[Proof of Claim] Notice that if $B_{\tau}\cap B\left(x,\,r\right)\neq\emptyset$
we have that $B_{\tau}\subset B\left(x,\,2r\right)$ since 
\[
e^{-S_{n_{\ell\left(r\right)+1}}\beta\left(\tau\xi^{\left(n_{\ell\left(r\right)+1}\right)}\right)}\le r.
\]
From the Ahlfors condition (\ref{eq:Ahlfors}) and from Claim \ref{claim:balls in images}
we get that 
\begin{align*}
Cr^{h} & \ge\mu_{h}\left(B\left(x,\,r\right)\right)\\
 & \ge\#\left\{ \tau\in I^{n_{\ell\left(r\right)+1}}\colon\varphi_{\tau}^{n_{\ell\left(r\right)+1}}\left(X\right)\cap B\left(x,\,r\right)\neq\emptyset\right\} \min_{\tau\in I^{n_{\ell\left(r\right)+1}}}\mu_{h}\left(\varphi_{\tau}^{n_{\ell\left(r\right)+1}}\left(X\right)\right)\\
 & \ge\#\left\{ \tau\in R_{\ell\left(r\right)+1}\colon\varphi_{\tau}^{n_{\ell\left(r\right)+1}}\left(X\right)\cap B\left(x,\,r\right)\neq\emptyset\right\} \min_{\tau\in I^{n_{\ell\left(r\right)+1}}}\mu_{h}\left(\varphi_{\tau}^{n_{\ell\left(r\right)+1}}\left(X\right)\right)\\
 & \ge\#\left\{ \tau\in R_{\ell\left(r\right)+1}\colon B_{\tau}\cap B\left(x,\,r\right)\neq\emptyset\right\} \min_{\tau\in I^{n_{\ell\left(r\right)+1}}}\mu_{h}\left(\varphi_{\tau}^{n_{\ell\left(r\right)+1}}\left(X\right)\right)\\
 & \ge\#_{\ell\left(r\right)+1}\min_{\tau\in I^{n_{\ell\left(r\right)+1}}}\mu_{h}\left(\varphi_{\tau}^{n_{\ell\left(r\right)+1}}\left(X\right)\right)\\
 & \ge C^{-1}\#_{\ell\left(r\right)+1}\underline{\kappa}_{n_{\ell\left(r\right)+1}}^{h}.
\end{align*}
The result follows by solving for $\#_{\ell\left(r\right)+1}.$
\end{proof}
From the previous claim we obtain that 
\[
m\left(B\left(x,\,r\right)\right)\le C^{\ell\left(r\right)}\left(\frac{r}{\underline{\kappa}_{n_{\ell\left(r\right)+1}}}\right)^{h}\exp\left\{ h\sum_{k=1}^{\ell\left(r\right)}S_{n_{k}}\beta\left(\tau\vert_{n_{k}}\xi^{\left(n_{k}\right)}\right)\right\} \prod_{k=1}^{\ell\left(r\right)}\overline{\kappa}_{n_{k+1}}^{h}.
\]
By Frostman's lemma it is enough to show that there exists $\tau\in R_{\ell\left(r\right)+1}$
for which 
\[
C^{\ell\left(r\right)}\left(\frac{r}{\underline{\kappa}_{n_{\ell\left(r\right)+1}}}\right)^{h}\exp\left\{ h\sum_{k=1}^{\ell\left(r\right)}S_{n_{k}}\beta\left(\tau\vert_{n_{k}}\xi^{\left(n_{k}\right)}\right)\right\} \prod_{k=1}^{\ell\left(r\right)}\overline{\kappa}_{n_{k+1}}^{h}\le\text{const}\cdot r^{t}
\]
holds, which is equivalent to showing that 
\[
C^{\ell}\left(\frac{\overline{\kappa}_{n_{\ell\left(r\right)+1}}}{\underline{\kappa}_{n_{\ell\left(r\right)+1}}}\right)\exp\left\{ \sum_{k=1}^{\ell\left(r\right)}S_{n_{k}}\beta\left(\tau\vert_{n_{k}}\xi^{\left(n_{k}\right)}\right)\right\} \prod_{k=1}^{\ell\left(r\right)-1}\overline{\kappa}_{n_{k+1}}\le\text{const}\cdot r^{\nicefrac{t}{h}-1}
\]
holds for some $\tau\in R_{\ell\left(r\right)+1}.$ 

From the definition of $\ell\left(r\right)$ it follows that $\exp\left\{ -S_{n_{\ell\left(r\right)}}\beta\left(\tau\vert_{n_{\ell\left(r\right)}}\xi^{\left(n_{n_{\ell\left(r\right)}}\right)}\right)\right\} >r$
for some $\tau\in R_{\ell\left(r\right)+1}.$ By comparing (\ref{eq:RU pressure})
and (\ref{eq:Pressure}) we see that $t<b\le\text{HD}\left(J\right)=h$,
so that $\frac{t}{h}<1$. Hence, we have that 
\[
\exp\left\{ \left(1-\frac{t}{h}\right)S_{n_{\ell\left(r\right)}}\beta\left(\tau\vert_{n_{\ell\left(r\right)}}\xi^{\left(n_{\ell\left(r\right)}\right)}\right)\right\} <r^{\nicefrac{t}{h}-1},
\]
for some $\tau\in R_{\ell\left(r\right)+1}$. So it suffices to show
that 
\[
C^{\ell\left(r\right)}\left(\frac{\overline{\kappa}_{n_{\ell\left(r\right)+1}}}{\underline{\kappa}_{n_{\ell\left(r\right)+1}}}\right)\exp\left\{ \sum_{k=1}^{\ell\left(r\right)}S_{n_{k}}\beta\left(\tau\vert_{n_{k}}\xi^{\left(n_{k}\right)}\right)\right\} \prod_{k=1}^{\ell\left(r\right)-1}\overline{\kappa}_{n_{k+1}}\le\text{const}\cdot\exp\left\{ \left(1-\frac{t}{h}\right)S_{n_{\ell\left(r\right)}}\beta\left(\tau\vert_{\ell\left(r\right)}\xi^{\left(\ell\left(r\right)\right)}\right)\right\} 
\]
for some $\tau\in R_{\ell\left(r\right)+1}$, which is equivalent
to showing that 
\[
C^{\ell\left(r\right)}\left(\frac{\overline{\kappa}_{n_{\ell\left(r\right)+1}}}{\underline{\kappa}_{n_{\ell\left(r\right)+1}}}\right)\exp\left\{ \sum_{k=1}^{\ell\left(r\right)-1}S_{n_{k}}\beta\left(\tau\vert_{n_{k}}\xi^{\left(n_{k}\right)}\right)\right\} \prod_{k=2}^{\ell\left(r\right)}\overline{\kappa}_{n_{k}}\le\text{const}\cdot\exp\left\{ -\frac{t}{h}S_{n_{\ell\left(r\right)}}\beta\left(\tau\vert_{n_{\ell\left(r\right)}}\xi^{\left(n_{\ell\left(r\right)}\right)}\right)\right\} 
\]
holds for some $\tau\in R_{\ell\left(r\right)+1}$.

Since $\overline{P}_{\beta}\left(t\right)>0$ we have (by choosing
$n_{1}$ large enough if necessary) that 
\[
\frac{1}{n_{\ell\left(r\right)}}\log\sum_{\omega\in I^{n_{\ell}}}\exp\left\{ -tS_{n_{\ell\left(r\right)}}\beta\left(\omega\xi^{\left(n_{\ell\left(r\right)}\right)}\right)\right\} \ge\frac{3}{4}\overline{P}_{\beta}\left(t\right),
\]
which implies that 
\[
\sum_{\omega\in I^{n_{\ell\left(r\right)}}}\exp\left\{ -tS_{n_{\ell\left(r\right)}}\beta\left(\omega\xi^{\left(n_{\ell\left(r\right)}\right)}\right)\right\} \ge\exp\left\{ \frac{n_{\ell\left(r\right)}}{2}\overline{P}_{\beta}\left(t\right)\right\} .
\]

By defining $n_{1}$ to be large enough if necessary it follows from
inequality (\ref{eq:LVC}) that for any $\tau\in R_{\ell\left(r\right)+1}$
\[
\#I^{n_{\ell\left(r\right)}}\exp\left\{ -tS_{n_{\ell}}\beta\left(\tau\vert_{n_{\ell\left(r\right)}}\xi^{\left(n_{\ell}\right)}\right)+t\varepsilon n_{\ell\left(r\right)}\right\} \ge\sum_{\omega\in I^{n_{\ell\left(r\right)}}}\exp\left\{ -tS_{n_{\ell\left(r\right)}}\beta\left(\omega\xi^{\left(n_{\ell\left(r\right)}\right)}\right)\right\} .
\]
Combining the last two inequalities we get that it suffices to show
that 
\[
\exp\left\{ -\frac{t}{h}S_{n_{\ell}}\beta\left(\tau\vert_{n_{\ell\left(r\right)}}\xi^{\left(n_{\ell}\right)}\right)\right\} \ge\left(\#I^{n_{\ell\left(r\right)}}\right)^{-\nicefrac{1}{h}}\exp\left\{ -\frac{t}{h}\varepsilon n_{\ell\left(r\right)}\right\} \exp\left\{ \frac{3n_{\ell\left(r\right)}}{4h}\overline{P}_{\beta}\left(t\right)\right\} .
\]

This estimate yields the further sufficient condition
\[
C^{\ell\left(r\right)}\left(\frac{\overline{\kappa}_{n_{\ell\left(r\right)+1}}}{\underline{\kappa}_{n_{\ell\left(r\right)+1}}}\right)\exp\left\{ \sum_{k=1}^{\ell\left(r\right)-1}S_{n_{k}}\beta\right\} \prod_{k=2}^{\ell\left(r\right)}\overline{\kappa}_{n_{k}}\le\text{const}\cdot\exp\left\{ \frac{3n_{\ell\left(r\right)}}{4h}\overline{P}_{\beta}\left(t\right)\right\} \exp\left\{ -\frac{t}{h}\varepsilon n_{\ell\left(r\right)}\right\} \left(\#I^{n_{\ell\left(r\right)}}\right)^{-\nicefrac{1}{h}}
\]
for some $\tau\in R_{\ell\left(r\right)+1}.$

If we choose $\varepsilon$ such that $0<\varepsilon<\frac{\overline{P}_{\beta}\left(t\right)}{4t}$
then it suffices to show that 
\[
C^{\ell\left(r\right)}\left(\frac{\overline{\kappa}_{n_{\ell\left(r\right)+1}}}{\underline{\kappa}_{n_{\ell\left(r\right)+1}}}\right)\exp\left\{ \sum_{k=1}^{\ell\left(r\right)-1}S_{n_{k}}\beta\right\} \prod_{k=2}^{\ell\left(r\right)}\overline{\kappa}_{n_{k}}\le\text{const}\cdot\exp\left\{ \frac{n_{\ell\left(r\right)}}{2h}\overline{P}_{\beta}\left(t\right)\right\} \left(\#I^{n_{\ell\left(r\right)}}\right)^{-\nicefrac{1}{h}}
\]
for some $\tau\in R_{\ell\left(r\right)+1}.$

Now, since $\text{supp}\left(\mu_{h}\right)=J\subseteq\bigcup_{\omega\in I^{n_{\ell\left(r\right)}}}\varphi_{\omega}\left(X\right)$
we have that
\[
1=\sum_{\omega\in I^{n_{\ell\left(r\right)}}}\mu_{h}\left(\varphi_{\omega}\left(X\right)\right)\ge C^{-1}\left(\#I^{n_{\ell\left(r\right)}}\right)\underline{\kappa}_{n_{\ell\left(r\right)}}^{h},
\]
which yields the inequality 
\[
\left(\#I^{n_{\ell\left(r\right)}}\right)^{-\nicefrac{1}{h}}\ge C^{-\nicefrac{1}{h}}\underline{\kappa}_{n_{\ell\left(r\right)}}.
\]
Hence, it is enough to show that for some $\tau\in R_{\ell\left(r\right)+1}$
\begin{equation}
C^{\ell\left(r\right)}\left(\frac{\overline{\kappa}_{n_{\ell\left(r\right)+1}}}{\underline{\kappa}_{n_{\ell\left(r\right)+1}}}\right)\left(\frac{\overline{\kappa}_{n_{\ell\left(r\right)}}}{\underline{\kappa}_{n_{\ell\left(r\right)}}}\right)\exp\left\{ \sum_{k=1}^{\ell\left(r\right)-1}S_{n_{k}}\beta\left(\tau\vert_{n_{k}}\xi^{\left(n_{k}\right)}\right)\right\} \prod_{k=2}^{\ell\left(r\right)-1}\overline{\kappa}_{n_{k}}\le\text{const}\cdot\exp\left\{ \frac{n_{\ell\left(r\right)}}{2h}\overline{P}_{\beta}\left(t\right)\right\} .\label{eq:sufficient2}
\end{equation}
 Since the sequence $\left(\frac{\overline{\kappa}_{n}}{\underline{\kappa}_{n}}\right)$
is bounded, this inequality follows by showing 
\begin{align*}
C^{\ell\left(r\right)}\exp\left\{ \sum_{k=1}^{\ell\left(r\right)-1}S_{n_{k}}\beta\left(\tau\vert_{n_{k}}\xi^{\left(n_{k}\right)}\right)\right\} \prod_{k=2}^{\ell\left(r\right)-1}\overline{\kappa}_{n_{k}} & \le\text{const}\cdot\exp\left\{ \frac{n_{\ell\left(r\right)}}{2h}\overline{P}_{\beta}\left(t\right)\right\} .\\
 & =\text{const}\cdot\exp\left\{ \frac{n_{\ell\left(r\right)}}{4h}\overline{P}_{\beta}\left(t\right)\right\} \cdot\exp\left\{ \frac{n_{\ell\left(r\right)}}{4h}\overline{P}_{\beta}\left(t\right)\right\} .
\end{align*}
 Furthermore, it is enough to show that 
\[
\exp\left\{ \sum_{k=1}^{\ell\left(r\right)-1}S_{n_{k}}\beta\left(\tau\vert_{n_{k}}\xi^{\left(n_{k}\right)}\right)\right\} \prod_{k=2}^{\ell\left(r\right)-1}\overline{\kappa}_{n_{k}}\le\text{const}\cdot\exp\left\{ \frac{n_{\ell\left(r\right)}}{4h}\overline{P}_{\beta}\left(t\right)\right\} 
\]
for some $\tau\in R_{\ell\left(r\right)+1}$ and that 
\[
C^{\ell\left(r\right)}\le\exp\left\{ \frac{n_{\ell\left(r\right)}}{4h}\overline{P}_{\beta}\left(t\right)\right\} .
\]
The first inequality is satisfied given condition (\ref{eq:incsubseq1}).
The second inequality is satisfied by choosing our rapidly increasing
sequence $\left(n_{l}\right)$ to satisfy $n_{l}\gg l.$ This completes
the proof.
\end{proof}
\begin{thm}
\label{thm:unboundedwithgaps}Let $\Phi$ be a conformal nonautonomous
IFS satisfying the OSC, ESC, UCC, and NEQ conditions. If there exists
an $h$-Ahlfors measure supported on $J$, $\overline{P}\left(t\right)=\underline{P}\left(t\right)$
on a neighborhood of $b$, and 
\begin{equation}
\lim_{n\to\infty}\frac{1}{n}\log\frac{\overline{\kappa}_{n}}{\underline{\kappa}_{n}}=0,\label{eq:subexponential}
\end{equation}
then $\text{HD}\left(\mathscr{D}\right)=b$.
\end{thm}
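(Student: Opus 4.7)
The plan is to adapt the proof of Theorem \ref{thm:boundedwithgaps} almost verbatim, retaining the same tree construction and Frostman-based strategy, and localizing the use of the two weakened hypotheses to the two places in that proof where boundedness of $\overline{\kappa}_{n}/\underline{\kappa}_{n}$ and LVC enter. Concretely, given $0<t<b$ we still pick a sufficiently rapidly increasing sequence $\left(n_{l}\right)$, set $R_{l+1}(\omega)=\{\tau\in I^{n_{l+1}}:B_{\tau}\subset B_{\omega}\}$ for $\omega\in R_{l}$, define the uniform Cantor-like mass $m_{l}$ as in (\ref{eq:mass}), take a weak limit $m$ supported on $K=\bigcap_{l}\bigcup_{\omega\in R_{l}}B_{\omega}\subset\mathscr{D}$, and verify the Frostman estimate $m(B(x,r))\le \mathrm{const}\cdot r^{t}$. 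The Ahlfors-based ingredients used in Theorem \ref{thm:boundedwithgaps}---Claims \ref{claim:increasingsubsequence} and \ref{claim:balls in images}, the lower bound (\ref{eq:lowerboundR}) on $\#R_{l+1}(\omega)$, and the upper bound on $\#_{\ell(r)+1}$---rely only on OSC, ESC, UCC, NEQ and the existence of the $h$-Ahlfors measure, so they all transfer unchanged, as does the chain of sufficient conditions leading to (\ref{eq:sufficient2}).

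Where the boundedness of $(\overline{\kappa}_{n}/\underline{\kappa}_{n})$ was used in Theorem \ref{thm:boundedwithgaps} to discard the factor $(\overline{\kappa}_{n_{\ell(r)+1}}/\underline{\kappa}_{n_{\ell(r)+1}})(\overline{\kappa}_{n_{\ell(r)}}/\underline{\kappa}_{n_{\ell(r)}})$ appearing in (\ref{eq:sufficient2}), we now invoke (\ref{eq:subexponential}): each of these two ratios is $e^{o(n_{\ell(r)})}$. Since $t<b$ forces $\overline{P}_{\beta}(t)>0$, we split the exponential gain $\exp\{n_{\ell(r)}\overline{P}_{\beta}(t)/(2h)\}$ into three pieces---one for $C^{\ell(r)}$, one for the two $\kappa$-ratios, and a residual $\overline{P}_{\beta}(t)/(4h)$ factor still needed downstream. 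This only strengthens the growth requirement on $(n_{l})$, analogously to (\ref{eq:incsubseq1}).

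Bypassing the LVC is the more delicate point. In Theorem \ref{thm:boundedwithgaps} inequality (\ref{eq:LVC}) was used exactly once, to convert the bulk lower bound $\sum_{\omega\in I^{n_{\ell}}}e^{-tS_{n_{\ell}}\beta(\omega\xi)}\ge\exp\{n_{\ell}\overline{P}_{\beta}(t)/2\}$ into a pointwise estimate for the specific $\tau\in R_{\ell(r)+1}$ selected by $B(x,r)$, at the cost of an $e^{-t\varepsilon n_{\ell}}$ loss. The hypothesis $\overline{P}_{\beta}=\underline{P}_{\beta}$ on a neighborhood of $b$ promotes that bulk bound to the two-sided asymptotic $\sum_{\omega\in I^{n}}e^{-tS_{n}\beta(\omega\xi)}=e^{n(P_{\beta}(t)+o(1))}$ valid for every large $n$, which we would exploit by refining the descendant set: instead of $R_{l+1}(\omega)=\{\tau:B_{\tau}\subset B_{\omega}\}$, retain only those descendants for which $e^{-tS_{n_{l+1}}\beta(\tau\xi)}$ lies within an $e^{o(n_{l+1})}$ window of the average $(\#I^{n_{l+1}})^{-1}\sum_{\omega'}e^{-tS_{n_{l+1}}\beta(\omega'\xi)}$. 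The pressure equality together with (\ref{eq:subexponential}) should show that this typical-weight subset is still large enough to dominate the right-hand side of (\ref{eq:lowerboundR}) up to a further $e^{o(n_{l+1})}$ factor---absorbed again by the exponential gain described in the previous paragraph---and then every $\tau\in R_{\ell(r)+1}$ automatically inherits sharp control of $S_{n_{k}}\beta(\tau|_{n_{k}}\xi)$ at every earlier scale $k\le\ell(r)$, exactly reproducing the output of (\ref{eq:LVC}).

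The principal obstacle is precisely this refinement step: turning the equality $\overline{P}_{\beta}=\underline{P}_{\beta}$ into a large-deviations-style bound strong enough to certify that $\sum e^{-tS_{n}\beta}$ is indeed carried by typical-weight words rather than by a thin tail of anomalously concentrated ones. Without LVC one cannot a priori rule out such tails, and the bound must be extracted from the pressure equality and (\ref{eq:subexponential}) alone, without invoking extra regularity of the pressure function such as differentiability. Once this step is secured, the remainder of the proof of Theorem \ref{thm:boundedwithgaps} carries through after the two substitutions just described.
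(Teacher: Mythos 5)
Your overall plan---reuse the Cantor construction, the Frostman argument, and the reduction to (\ref{eq:sufficient2}) from Theorem \ref{thm:boundedwithgaps}---does match the paper's, but you overlook the key role of the pressure equality $\overline{P}_\beta=\underline{P}_\beta$, and this produces a concrete error in your treatment of the $\kappa$-ratio. You assert that (\ref{eq:subexponential}) makes \emph{both} factors $\overline{\kappa}_{n_{\ell(r)}}/\underline{\kappa}_{n_{\ell(r)}}$ and $\overline{\kappa}_{n_{\ell(r)+1}}/\underline{\kappa}_{n_{\ell(r)+1}}$ equal to $e^{o(n_{\ell(r)})}$ and that any remaining slack "only strengthens the growth requirement on $(n_l)$." For the second factor this is false: (\ref{eq:subexponential}) gives only $e^{o(n_{\ell(r)+1})}$, which is $e^{o(n_{\ell(r)})}$ precisely when $n_{\ell(r)+1}/n_{\ell(r)}$ stays bounded. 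Accelerating the growth of $(n_l)$, as you propose, does the opposite; the second ratio can then outrun the entire available gain $\exp\{n_{\ell(r)}\overline{P}_\beta(t)/(2h)\}$, and the Frostman estimate collapses.

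What $\overline{P}_\beta=\underline{P}_\beta$ actually buys in the paper---and what your proposal is missing---is the freedom to choose a \emph{slowly} growing subsequence. In Theorem \ref{thm:boundedwithgaps} one must pick $(n_l)$ along which the $\limsup$ in (\ref{eq:subsequence}) is attained, and one can only ever thin such a sequence further, so $n_{l+1}/n_l$ may be huge. Once $\overline{P}_\beta=\underline{P}_\beta$ near $b$, (\ref{eq:subsequence}) holds for \emph{every} increasing sequence, and the paper takes the one defined by (\ref{eq:subseq}); Claim \ref{claim:boundedconsecutiveterms} then gives $A^{-1}\underline{\alpha}\le n_{l+1}/n_l\le A^{-1}\underline{\alpha}+2$. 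The three-way split of (\ref{eq:sufficient2}) into (\ref{eq:threeinequalities}) and the two $\kappa$-ratio inequalities, each worth $P_\beta(t)/(6h)$, is then closed using (\ref{eq:subexponential}), ESC, and this bounded-consecutive-ratio property. Regarding LVC: you correctly locate the single point where (\ref{eq:LVC}) enters the earlier proof, but your "typical-weight" refinement of $R_{l+1}$ is not the paper's route (the paper simply takes (\ref{eq:sufficient2}) as its point of departure), and, as you yourself concede, that refinement is left unresolved. Even were it resolved, your plan would still fail at the $\kappa$-ratio step for lack of the bounded-ratio subsequence.
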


\begin{proof}
As before, we choose $0\le t<b$ in the neighborhood of $b$ where
$P_{\beta}$ exists. It suffices to show that inequality (\ref{eq:sufficient2})
holds. This will follow from showing that the following three inequalities
hold for some $\tau\in R_{\ell\left(r\right)+1}$:

\begin{align}
C^{\ell\left(r\right)}\exp\left\{ \sum_{k=1}^{\ell\left(r\right)-1}S_{n_{k}}\beta\left(\tau\vert_{n_{k}}\xi^{\left(n_{k}\right)}\right)\right\} \prod_{k=2}^{\ell\left(r\right)-1}\overline{\kappa}_{n_{k}} & \le\text{const}\cdot\exp\left\{ \frac{n_{\ell\left(r\right)}}{6h}P_{\beta}\left(t\right)\right\} ,\label{eq:threeinequalities}
\end{align}
~

\[
\frac{\overline{\kappa}_{n_{\ell\left(r\right)}}}{\underline{\kappa}_{n_{\ell\left(r\right)}}}\le\exp\left\{ \frac{n_{\ell\left(r\right)}}{6h}P_{\beta}\left(t\right)\right\} ,
\]
and 
\[
\frac{\overline{\kappa}_{n_{\ell\left(r\right)+1}}}{\underline{\kappa}_{n_{\ell\left(r\right)+1}}}\le\exp\left\{ \frac{n_{\ell\left(r\right)}}{6h}P_{\beta}\left(t\right)\right\} .
\]
The second inequality is equivalent to the inequality
\[
\frac{1}{n_{\ell\left(r\right)}}\log\frac{\overline{\kappa}_{n_{\ell\left(r\right)}}}{\underline{\kappa}_{n_{\ell\left(r\right)}}}\le\frac{P_{\beta}\left(t\right)}{6h},
\]
which is satisfied simply by choosing $n_{1}$ large enough. This
can be achieved withough loss of generality since $P_{\beta}\left(t\right)>0$
and by assumption (\ref{eq:subexponential}).

To proving the third inequality first we note that it is equivalent
to 
\[
\frac{n_{\ell\left(r\right)+1}}{n_{\ell\left(r\right)}}\frac{1}{n_{\ell\left(r\right)+1}}\log\frac{\overline{\kappa}_{n_{\ell\left(r\right)}}}{\underline{\kappa}_{n_{\ell\left(r\right)}}}\le\frac{P_{\beta}\left(t\right)}{6h}.
\]
Let 
\[
0<A\le\frac{P_{\beta}\left(t\right)}{6h}\underline{\alpha}\left(\log C+\overline{\alpha}\right)^{-1},
\]
where $C$ is the same constant as in (\ref{eq:Ahlfors}). Since $\overline{P}_{\beta}\left(t\right)=\underline{P}_{\beta}\left(t\right)$
we have that (\ref{eq:subsequence}) holds for every increasing sequence
$\left(n_{l}\right)$. Consider in particular an increasing sequence
with the property 
\begin{equation}
n_{l+1}=\min\left\{ n\in\mathbb{N}\mid nA\ge\underline{\alpha}\left(n_{1}+\cdots+n_{l}\right)\right\} ,\label{eq:subseq}
\end{equation}
for all $l\in\mathbb{N}.$

Such a sequence satisfies the following claim.
\begin{claim}
\label{claim:boundedconsecutiveterms}The following inequality holds:
\[
A^{-1}\underline{\alpha}\le\frac{n_{l+1}}{n_{l}}\le A^{-1}\underline{\alpha}+2.
\]
\end{claim}

\begin{proof}
Condition \ref{eq:subseq} implies that
\[
A\left(n_{l+1}-1\right)\le\underline{\alpha}\left(n_{1}+\cdots+n_{l}\right)\le An_{l+1}.
\]
Therefore, 
\begin{align*}
A\left(n_{l+1}-n_{l}-1\right) & \le\underline{\alpha}n_{l}\le A\left(n_{l+1}-n_{l}+1\right)
\end{align*}
Re-arranging terms algebraically we get 
\[
\underline{\alpha}n_{l}A^{-1}-1\le n_{l+1}-n_{l}\le\underline{\alpha}n_{l}A^{-1}+1,
\]
\[
n_{l}+\underline{\alpha}n_{l}A^{-1}-1\le n_{l+1}\le n_{l}+\underline{\alpha}n_{l}A^{-1}+1,
\]
\begin{align*}
\underline{\alpha}A^{-1}+1-\frac{1}{n_{l}} & \le\frac{n_{l+1}}{n_{l}}\le\underline{\alpha}A^{-1}+1+\frac{1}{n_{l}},
\end{align*}
\[
\underline{\alpha}A^{-1}\le\frac{n_{l+1}}{n_{l}}\le\underline{\alpha}A^{-1}+2.
\]
This proves the claim.
\end{proof}
Since our sequence is chosen so that $\frac{n_{l+1}}{n_{l}}$ is uniformly
bounded, the desired inequality 
\[
\frac{n_{\ell\left(r\right)+1}}{n_{\ell\left(r\right)}}\frac{1}{n_{\ell\left(r\right)+1}}\log\frac{\overline{\kappa}_{n_{\ell\left(r\right)+1}}}{\underline{\kappa}_{n_{\ell\left(r\right)+1}}}\le\frac{P_{\beta}\left(t\right)}{6h}
\]
follows again by choosing $n_{1}$ large enough. 

The remaining inequality 
\[
C^{\ell\left(r\right)}\exp\left\{ \sum_{k=1}^{\ell\left(r\right)-1}S_{n_{k}}\beta\left(\tau\vert_{n_{k}}\xi^{\left(n_{k}\right)}\right)\right\} \prod_{k=2}^{\ell\left(r\right)-1}\overline{\kappa}_{n_{k}}\le\text{const}\cdot\exp\left\{ \frac{n_{\ell\left(r\right)}}{6h}P_{\beta}\left(t\right)\right\} 
\]
is equivalent to showing
\[
\ell\left(r\right)\log\left(C\right)+\sum_{k=1}^{\ell\left(r\right)-1}\left(S_{n_{k}}\beta\left(\tau\vert_{n_{k}}\xi^{\left(n_{k}\right)}\right)+\log\overline{\kappa}_{n_{k}}\right)\le\text{const}+\frac{n_{\ell\left(r\right)}}{6h}P_{\beta}\left(t\right).
\]
Given ESC, it suffices to show
\[
\ell\left(r\right)\log\left(C\right)+\sum_{k=1}^{\ell\left(r\right)-1}n_{k}\overline{\alpha}\le\text{const}+\frac{n_{\ell\left(r\right)}}{6h}P_{\beta}\left(t\right).
\]

Since $\text{const}>0$, this inequality is follows from showing
\begin{equation}
n_{\ell\left(r\right)}\ge\frac{6h}{P_{\beta}\left(t\right)}\left[\ell\left(r\right)\log\left(C\right)+\sum_{k=1}^{\ell\left(r\right)-1}n_{k}\overline{\alpha}\right].\label{eq:incsubseq3}
\end{equation}

In view of Claim \ref{claim:boundedconsecutiveterms}, we have that
$n_{l}\ge A^{-1}\underbar{\ensuremath{\alpha}}\left(n_{1}+\cdots+n_{l-1}\right)$
for all $l$. Now, condition (\ref{eq:incsubseq3}) holds if 
\[
A^{-1}\underbar{\ensuremath{\alpha}}\sum_{k=1}^{l-1}n_{k}\ge\frac{6h}{P_{\beta}\left(t\right)}\left[l\cdot\log\left(C\right)+\overline{\alpha}\sum_{k=1}^{l-1}n_{k},\right]
\]
or, re-arranging terms, if
\[
A^{-1}\ge\frac{6h}{P_{\beta}\left(t\right)}\left[\frac{l}{n_{1}+\cdots+n_{l-1}}\cdot\frac{\log\left(C\right)}{\underline{\alpha}}+\frac{\overline{\alpha}}{\underline{\alpha}}\right].
\]
Since the sequence $\left(n_{l}\right)$ is increasing and assuming
without loss of generality that $n_{1}\ge2$, we have that $l\le n_{1}+\cdots+n_{l-1}$for
all $l$. Hence, it suffices to show that 
\[
A^{-1}\ge\frac{6h}{P_{\beta}\left(t\right)}\left[\frac{\log\left(C\right)+\overline{\alpha}}{\underline{\alpha}}\right].
\]
This follows from our choice of $A$ above.

Since all three inequalities in (\ref{eq:threeinequalities}) hold,
this completes the proof.
\end{proof}

\section{\label{sec:Ahlfors-Measures}Ahlfors Measures}

Now we focus our attention on stablishing sufficient conditions for
the existance of an $h$-Ahlfors measure. Let us define for every
$n\in\mathbb{N},$ 
\[
\rho_{n}=\max_{a,\,b\in I^{\left(n\right)}}\frac{\left\Vert D\varphi_{a}^{\left(n\right)}\right\Vert }{\left\Vert D\varphi_{b}^{\left(n\right)}\right\Vert },
\]
and 
\[
Z_{n}\left(t\right)=\sum_{\omega\in I^{n}}\left\Vert D\varphi_{\omega}^{n}\right\Vert ^{t}.
\]
Following the analysis in \cite{rempe-gillen_non-autonomous_2015}
we obtain the following result.
\begin{thm}
If the sequences $\left(\#I^{\left(n\right)}\right)_{n\ge1},\,\left(\rho_{n}\right)_{n\ge1},\,\left(Z_{n}\left(h\right)\right)_{n\ge1},\text{ and }\left(Z_{n}^{-1}\left(h\right)\right)_{n\ge1}$
are bounded, then there exists an $h$-Ahlfors measure supported on
$J$. 
\end{thm}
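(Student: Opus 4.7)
The plan is to construct the Ahlfors measure as a weak-$*$ limit of natural finite-level measures built from the geometric potential $\|D\varphi_\omega^n\|^h$, and to verify the two-sided Ahlfors estimate by combining the four boundedness hypotheses with OSC, BDP, UCC and ESC. First I would set up the candidate measures: for each $n\ge 1$ and each $\omega \in I^n$ pick an auxiliary point $y_\omega \in \varphi_\omega^n(X) \cap J$ (take $y_\omega = \varphi_\omega^n(\pi_n(\eta))$ for a fixed $\eta \in I^{(n+1,\infty)}$), and set
\[
\mu_n := \frac{1}{Z_n(h)} \sum_{\omega \in I^n} \|D\varphi_\omega^n\|^h\, \delta_{y_\omega}.
\]
Since $X$ is compact, $(\mu_n)$ has a weak-$*$ convergent subsequence; let $\mu_h$ be a limit. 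Because $y_\omega \in \varphi_\omega^n(X)$ and $J = \bigcap_n \bigcup_{\omega} \varphi_\omega^n(X)$, one checks $\text{supp}(\mu_h) \subseteq J$.

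The core intermediate claim is a cylinder estimate: there exists $C \ge 1$ independent of $n$ and $\omega$ with
\[
C^{-1}\|D\varphi_\omega^n\|^h \le \mu_h\bigl(\varphi_\omega^n(X)\bigr) \le C\|D\varphi_\omega^n\|^h.
\]
Using BDP and the chain rule, for every $m > n$ and every $\tau \in I^{(n+1,m)}$,
\[
K^{-h}\|D\varphi_\omega^n\|^h \cdot \|D\varphi_\tau^{(n+1,m)}\|^h \le \|D\varphi_{\omega\tau}^m\|^h \le K^h\|D\varphi_\omega^n\|^h \cdot \|D\varphi_\tau^{(n+1,m)}\|^h.
\]
Summing over $\tau$, dividing by $Z_m(h)$, and invoking that $Z_m(h)$ is bounded above and below together with OSC (so that level-$m$ cylinders inside $\varphi_\omega^n(X)$ are essentially disjoint) yields the desired estimate for $\mu_m(\varphi_\omega^n(X))$. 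The estimate persists under the weak-$*$ limit after a standard regularization by slightly shrunken and slightly enlarged cylinders, where bounded $\rho_n$ and bounded $\#I^{(n)}$ guarantee that the contribution of boundary cylinders is negligible.

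Finally, to pass from cylinders to balls, fix $x \in J \cap \text{supp}(\mu_h)$ and $0 < r \le 1$, and let $n = n(r)$ be the least integer for which the cylinder $\varphi_{\omega(x)}^n(X)$ containing $x$ has diameter at most $r$. By conformality and BDP, $\text{diam}\,\varphi_{\omega(x)}^n(X) \asymp \|D\varphi_{\omega(x)}^n\|$, and UCC then gives $\|D\varphi_{\omega(x)}^n\| \asymp r$. The upper bound $\mu_h(B(x,r)) \le C r^h$ follows by summing the cylinder estimate over the level-$n$ cylinders whose closures meet $B(x,r)$, a number bounded uniformly thanks to OSC together with bounded $\rho_n$ and bounded $\#I^{(n)}$. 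The lower bound $\mu_h(B(x,r)) \ge C^{-1} r^h$ follows because the minimality of $n$ together with bounded $\rho_{n+1}$ and bounded $\#I^{(n+1)}$ forces $B(x,r)$ to contain at least one full level-$(n+1)$ descendant of $\varphi_{\omega(x)}^n(X)$, whose mass is already $\asymp r^h$ by the cylinder estimate.

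The main obstacle is the uniform two-sided cylinder estimate. The upper half is fairly direct from the definition of the $\mu_n$ and boundedness of $Z_n(h)$, but the matching lower bound requires showing that a definite fraction of the mass of $\mu_m$ remains inside the fixed cylinder $\varphi_\omega^n(X)$ as $m \to \infty$. This is precisely where the four boundedness hypotheses enter \emph{simultaneously}: bounded $\#I^{(n)}$ and bounded $\rho_n$ control the branching geometry and prevent any single branch from monopolizing mass at any level, while bounded $Z_n(h)$ and $Z_n^{-1}(h)$ keep the normalizing partition functions commensurable across all scales. Should any one of these fail, mass could either drift outside any prescribed cylinder or decay super-polynomially, breaking Ahlfors regularity.
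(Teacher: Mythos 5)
Your overall architecture parallels the paper's: a measure built as a weak-$*$ limit of normalized geometric cylinder measures, a two-sided cylinder estimate $\mu_h(\varphi_\omega^n(X)) \asymp \|D\varphi_\omega^n\|^h$ obtained from BDP plus boundedness of $Z_n(h)$ and $Z_n^{-1}(h)$, and then a transfer to balls. The paper, however, outsources the upper ball estimate $\mu(B(x,r))\le Cr^h$ to Theorem 3.2 of Rempe-Gillen--Urba\'nski (it only verifies their liminf criterion for $t=h$) and proves the lower ball estimate in house; you attempt to prove both bounds directly from your cylinder estimate, and it is exactly in the direct upper-ball argument that a real gap appears.

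Your transfer to balls relies on the assertion that ``UCC gives $\|D\varphi_{\omega(x)}^n\|\asymp r$'' and on summing the cylinder estimate over the level-$n$ cylinders that meet $B(x,r)$, with $n=n(x,r)$ chosen from the cylinder containing $x$. Both steps need an ingredient you never establish: that $\min_{a\in I^{(n)}}\|D\varphi_a^{(n)}\|$ is bounded \emph{below} by a positive constant uniformly in $n$. UCC supplies only the upper bound $\overline{\kappa}_{(n)}\le e^{-\theta}$, so by itself it cannot force $\|D\varphi_{\omega(x)}^n\|\gtrsim r$ from the minimality of $n$. The paper devotes a separate claim (Claim \ref{claim:BoundedNorms}) to proving this lower bound, and that claim genuinely uses all four boundedness hypotheses simultaneously; without it, neither your lower-ball nor your upper-ball estimate closes. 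More seriously, even granting that claim, summing over cylinders at the \emph{fixed combinatorial level} $n=n(x,r)$ does not yield $\mu_h(B(x,r))\lesssim r^h$, because other words $\tau\in I^n$ need not satisfy $\text{diam}\,\varphi_\tau^n(X)\lesssim r$ --- the ratio $\overline{\kappa}_n/\underline{\kappa}_n$ is not assumed bounded in this theorem (that hypothesis belongs to Theorem \ref{thm:boundedwithgaps}, not here), and $\rho_n\le B$ only bounds single-step ratios, so level-$n$ sizes can spread by a factor as large as $B^n$. A cylinder $\varphi_\tau^n(X)$ meeting $B(x,r)$ but of diameter $\gg r$ contributes mass $\gg r^h$ and destroys the estimate. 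The standard repair is a Moran/stopping-time cover by cylinders of diameter $\asymp r$ rather than a fixed-level cover; this is in effect what the delicate Rempe-Gillen--Urba\'nski criterion (with its $1+\log[\max_j\rho_j]$ denominator) is engineered to handle, and it is why the paper cites their theorem rather than re-deriving the upper ball bound.
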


\begin{proof}
In the proof of Therem 3.2 in \cite{rempe-gillen_non-autonomous_2015}
the authors construct a measure $\mu$ on $J$ for which 
\[
\mu\left(B\left(x,\,r\right)\right)\le Cr^{t}
\]
holds for every $x\in X$ and $r>0$ and for every $t\ge0$ satisfying
\[
\liminf_{n\to\infty}\frac{Z_{n-1}\left(t\right)\cdot\left(\#I^{\left(n\right)}\right)^{\nicefrac{t}{d}}}{1+\log\left[\max_{j\le n}\rho_{j}\right]}\min_{a\in I^{\left(n\right)}}\left\Vert D\varphi_{a}^{\left(n\right)}\right\Vert ^{t}>0.
\]
We claim that the measure $\mu$ is $h$-Ahlfors. In order to prove
the upper bound in the Ahlfors condition it suffices to show that
the limit inferior above is positive for $t=h$. 

Let $B$ be a bound for all the sequences in the hypothesis of the
theorem. Since $\#I^{\left(n\right)}\ge2$ and $\rho_{n}\le B$, it
suffices to show that 
\[
\liminf_{n\to\infty}Z_{n-1}\left(h\right)\min_{a\in I^{\left(n\right)}}\left\Vert D\varphi_{a}^{\left(n\right)}\right\Vert ^{h}>0.
\]

Note that since the sequence $\left(Z_{n}^{-1}\left(h\right)\right)_{n\ge1}$
is bounded abounded above by $B$ we have that the sequence $\left(Z_{n}\left(h\right)\right)_{n\ge1}$
is bounded below by $B^{-1}>0$. 

So it suffices to show the following
\begin{claim}
\label{claim:BoundedNorms}The sequence 
\[
\left(\min_{a\in I^{\left(n\right)}}\left\Vert D\varphi_{a}^{\left(n\right)}\right\Vert ^{h}\right)_{n\ge1}
\]
is bounded below by a positive number.
\end{claim}

\begin{proof}
Note that 
\begin{align*}
B^{-1} & \le Z_{n+1}\left(h\right)\\
 & =\sum_{\tau\in I^{n+1}}\left\Vert D\varphi_{\tau}^{n+1}\right\Vert ^{h}\\
 & =\sum_{\omega\in I^{n}}\sum_{a\in I^{\left(n+1\right)}}\left\Vert D\varphi_{\omega a}^{n+1}\right\Vert ^{h}\\
 & \le\sum_{\omega\in I^{n}}\sum_{a\in I^{\left(n+1\right)}}\left\Vert D\varphi_{\omega}^{n}\right\Vert ^{h}\left\Vert D\varphi_{a}^{\left(n+1\right)}\right\Vert ^{h}\\
 & =Z_{n}\left(h\right)\sum_{a\in I^{\left(n+1\right)}}\left\Vert D\varphi_{a}^{\left(n+1\right)}\right\Vert ^{h}\\
 & \le Z_{n}\left(h\right)\left(\#I^{\left(n+1\right)}\right)\max_{a\in I^{\left(n+1\right)}}\left\Vert D\varphi_{a}^{\left(n+1\right)}\right\Vert ^{h}\\
 & =Z_{n}\left(h\right)\left(\#I^{\left(n+1\right)}\right)\rho_{n+1}\min_{a\in I^{\left(n+1\right)}}\left\Vert D\varphi_{a}^{\left(n+1\right)}\right\Vert ^{h}
\end{align*}
Since the product $Z_{n}\left(h\right)\left(\#I^{\left(n+1\right)}\right)\rho_{n+1}$
is uniformly bounded above, the claim follows.
\end{proof}
To prove that $\mu\left(B\left(x,\,r\right)\right)\ge C^{-1}r^{t}$
we shall now consider an arbitrary $0\le r<\text{diam}\left(J\right)$
and $x\in J.$ Note that 
\[
x\in\bigcap_{n\ge1}\varphi_{\xi\vert_{n}}^{n}\left(X\right),
\]
for some $\xi\in I^{\infty}.$ Define 
\[
\mathfrak{n}=\max\left\{ k\in\mathbb{N}\mid\text{diam}\left(\varphi_{\xi\vert_{k}}^{k}\left(X\right)\right)\ge r\right\} .
\]
It follows that 
\begin{align*}
\mu\left(B\left(x,\,r\right)\right) & \ge\mu\left(\varphi_{\xi\vert_{\mathfrak{n}+1}}^{\mathfrak{n}+1}\left(X\right)\right)
\end{align*}
since $\varphi_{\xi\vert_{\mathfrak{n}+1}}^{\mathfrak{n}+1}\left(X\right)\subseteq B\left(x,\,r\right)$. 

We make the following 
\begin{claim}
For all $n\in\mathbb{N}$ and every $\omega\in I^{n}$ the measure
$\mu$ satisfies
\[
\mu\left(\varphi_{\omega}^{n}\left(X\right)\right)\ge K^{-h}Z_{n}^{-1}\left(h\right)\left\Vert D\varphi_{\omega}^{n}\right\Vert ^{h},
\]
where $K\ge1$ is the distortion constant.
\end{claim}

\begin{proof}
In \cite{rempe-gillen_non-autonomous_2015} the measure $\mu$ is
constructed as a weak limit of a sequence $\left(\mu^{\left(n\right)}\right)_{n\ge1}$
of measures where 
\[
\mu^{\left(n\right)}\left(\varphi_{\omega}^{n}\left(X\right)\right)=\frac{\left\Vert D\varphi_{\omega}^{n}\right\Vert ^{h}}{Z_{n}\left(h\right)}
\]
 for all $\omega\in I^{n}.$ Now, for every $q\in\mathbb{N}$ and
every $\omega\in I^{n}$ we have that 
\begin{align*}
\mu^{\left(n+q\right)}\left(\varphi_{\omega}^{n}\left(X\right)\right) & =\mu^{\left(n+q\right)}\left(\varphi_{\omega}^{n}\left(X\right)\cap\text{supp}\left(\mu^{\left(n+q\right)}\right)\right)\\
 & =\mu^{\left(n+q\right)}\left(\bigcup_{\gamma\in I^{\left(n+1,n+q\right)}}\varphi_{\omega\gamma}^{n+q}\left(X\right)\right)\\
 & =\sum_{\gamma\in I^{\left(n+1,n+q\right)}}\mu^{\left(n+q\right)}\left(\varphi_{\omega\gamma}^{n+q}\left(X\right)\right)\\
 & =Z_{n+q}^{-1}\left(h\right)\sum_{\gamma\in I^{\left(n+1,n+q\right)}}\left\Vert D\varphi_{\omega\gamma}^{n+q}\right\Vert ^{h}\\
 & \ge Z_{n+q}^{-1}\left(h\right)\sum_{\gamma\in I^{\left(n+1,n+q\right)}}K^{-h}\left\Vert D\varphi_{\omega}^{n}\right\Vert ^{h}\left\Vert D\varphi_{\gamma}^{\left(n+1,n+q\right)}\right\Vert ^{h},
\end{align*}
where the last inequality follows from the BDP. 

Furthermore, the inequality 
\[
Z_{n+q}^{-1}\left(h\right)\sum_{\gamma\in I^{\left(n+1,n+q\right)}}\left\Vert D\varphi_{\gamma}^{\left(n+1,n+q\right)}\right\Vert ^{h}\ge Z_{n}^{-1}\left(h\right)
\]
follows from noting that
\begin{align*}
Z_{n+q}\left(h\right) & =\sum_{\tau\in I^{n+q}}\left\Vert D\varphi_{\tau}^{n+q}\right\Vert ^{h}\\
 & =\sum_{\omega'\in I^{n}}\sum_{\gamma\in I^{\left(n+1,n+q\right)}}\left\Vert D\varphi_{\omega'\gamma}^{n+q}\right\Vert ^{h}\\
 & \le\sum_{\omega'\in I^{n}}\sum_{\gamma\in I^{\left(n+1,n+q\right)}}\left\Vert D\varphi_{\omega'}^{n}\right\Vert ^{h}\left\Vert D\varphi_{\gamma}^{\left(n+1,n+q\right)}\right\Vert ^{h}\\
 & =Z_{n}\left(h\right)\sum_{\gamma\in I^{\left(n+1,n+q\right)}}\left\Vert D\varphi_{\gamma}^{\left(n+1,n+q\right)}\right\Vert ^{h}.
\end{align*}
This proves that 
\[
\mu_{n+q}\left(\varphi_{\omega}^{n}\left(X\right)\right)\ge Z_{n}^{-1}\left(h\right)K^{-h}\left\Vert D\varphi_{\omega}^{n}\right\Vert ^{h}
\]
for all $q\in\mathbb{N}$. Taking the limit as $q\to\infty$ proves
the claim.
\end{proof}
From the claim above it follows now that 
\begin{align*}
\mu\left(B\left(x,\,r\right)\right) & \ge\mu\left(\varphi_{\xi\vert_{\mathfrak{n}+1}}^{\mathfrak{n}+1}\left(X\right)\right)\\
 & \ge C^{-1}Z_{\mathfrak{n}+1}^{-1}\left(h\right)\left\Vert D\varphi_{\xi\vert_{\mathfrak{n}+1}}^{\mathfrak{n}+1}\right\Vert ^{h}\\
 & \ge C^{-1}Z_{\mathfrak{n}+1}^{-1}\left(h\right)K^{-h}\left\Vert D\varphi_{\xi\vert_{\mathfrak{n}}}^{\mathfrak{n}}\right\Vert ^{h}\left\Vert D\varphi_{\xi_{\mathfrak{n}+1}}^{\left(\mathfrak{n}+1\right)}\right\Vert ^{h},
\end{align*}
where the last inequality follows from BDP. 

By the mean value inequality we have that 
\[
\left\Vert D\varphi_{\xi\vert_{\mathfrak{n}}}^{\mathfrak{n}}\right\Vert \text{diam}\left(X\right)\ge\text{diam}\left(\varphi_{\xi\vert_{\mathfrak{n}}}^{\mathfrak{n}}\left(X\right)\right)\ge r.
\]
Redefining $C$ we obtain that
\[
\mu\left(B\left(x,\,r\right)\right)\ge C^{-1}Z_{\mathfrak{n}+1}^{-1}\left(h\right)r^{h}\left\Vert D\varphi_{\xi_{\mathfrak{n}+1}}^{\left(\mathfrak{n}+1\right)}\right\Vert ^{h}.
\]
From the hypothesis and Claim \ref{claim:BoundedNorms} the product
$Z_{n+1}^{-1}\left(h\right)\left\Vert D\varphi_{\xi_{n+1}}^{\left(n+1\right)}\right\Vert ^{h}$
is uniformly bounded below by a positive number. This allows us to
redefine $C$, independent of $x$ and $r$, to obtain 
\[
\mu\left(B\left(x,\,r\right)\right)\ge C^{-1}r^{h},
\]
as desired.
\end{proof}

\section{Perturbations of linear systems in one dimension}

Let $X=\left[0,\,1\right]$ and consider a piecewise linear nonautonomous
IFS $\Phi=\left\{ \varphi_{e}^{\left(n\right)}\right\} _{n\in\mathbb{N},\,e\in I^{\left(n\right)}}$.
Now consider a nonlinear perturbative system $\tilde{\Phi}=\left\{ \tilde{\varphi}_{e}^{\left(n\right)}\right\} _{n\in\mathbb{N},\,e\in I^{\left(n\right)}}$
satisfying 
\[
\tilde{\varphi}_{e}^{\left(n\right)}\left(x\right)=\varphi_{e}^{\left(n\right)}\left(u_{e}^{\left(n\right)}\right)+\left(\left(\varphi_{e}^{\left(n\right)}\right)'\right)\int_{u_{e}^{\left(n\right)}}^{x}\left(1+\gamma_{e}^{\left(n\right)}\left(t\right)\right)\,dt,
\]
where $u_{e}^{\left(n\right)}\in\left[0,\,1\right]$ and $\gamma_{e}^{\left(n\right)}\colon\left[0,\,1\right]\to\left(-\epsilon_{n},\,\epsilon_{n}\right)$
is H\"older continuous and $\epsilon_{n}>0$ is independent of $e\in I^{\left(n\right)}$.
Our goal is to establish sufficient conditions on $\gamma'\text{s}$
for which  the system $\left\{ \tilde{\varphi}\right\} $ satisfies
the hypothesis of Theorem \ref{thm:boundedwithgaps} or \ref{thm:unboundedwithgaps}.

Observe that 
\[
\left|\left(\tilde{\varphi}_{e}^{\left(n\right)}\right)'\left(x\right)\right|=\left|\left(\varphi_{e}^{\left(n\right)}\right)'\right|\left|1+\gamma_{e}^{\left(n\right)}\left(x\right)\right|,
\]
and
\begin{align*}
\left|\left(\tilde{\varphi}_{\omega}^{n}\right)'\left(x\right)\right| & =\prod_{k=1}^{n}\left(\tilde{\varphi}_{\omega_{k}}^{\left(k\right)}\right)'\left(\tilde{\varphi}_{\sigma^{k}\omega}^{\left(k+1,\,n\right)}\left(x\right)\right)\\
 & =\left|\left(\varphi_{\omega}^{n}\right)'\right|\prod_{k=1}^{n}\left|1+\gamma_{\omega_{k}}^{\left(k\right)}\left(\tilde{\varphi}_{\sigma^{k}\omega}^{\left(k+1,\,n\right)}\left(x\right)\right)\right|\\
 & \le\overline{\kappa}_{n}\prod_{k=1}^{n}\left|1+\gamma_{\omega_{k}}^{\left(k\right)}\left(\tilde{\varphi}_{\sigma^{k}\omega}^{\left(k+1,\,n\right)}\left(x\right)\right)\right|.
\end{align*}
Now define 
\[
\overline{\gamma^{\left(n\right)}}=\max_{e\in I^{\left(n\right)}}\sup_{x\in\left[0,\,1\right]}\left\{ \left|\gamma_{e}^{\left(n\right)}\left(x\right)\right|\right\} .
\]
Then we have that for all $x\in\left[0,\,1\right]$
\[
\underline{\kappa}_{n}\prod_{k=1}^{n}\left[1-\overline{\gamma^{\left(k\right)}}\right]\le\left|\left(\tilde{\varphi}_{\omega}^{n}\right)'\left(x\right)\right|\le\overline{\kappa}_{n}\prod_{k=1}^{n}\left[1+\overline{\gamma^{\left(k\right)}}\right].
\]

Now we impose some conditions on $\epsilon$ that will guarantee $\left\{ \tilde{\varphi}\right\} $
to satisfy the OSC. Let $g_{\left(n\right)}$ be the size of the smallest
``gap'' between images under the unperturbed system $\Phi$ at level
$n,$ i.e., 
\[
g_{\left(n\right)}:=\min\left\{ \left|\varphi_{j}^{\left(n\right)}\left(x\right)-\varphi_{i}^{\left(n\right)}\left(y\right)\right|\colon x,\,y\in\left[0,\,1\right];\,j,\,i\in I^{\left(n\right)},\,j\neq i\right\} .
\]

We will assume $\Phi$ has the strong separation condition, i.e.,
that $g_{\left(n\right)}>0$ for all $n$.
\begin{lem}
If $0<\epsilon_{n}<\frac{g_{\left(n\right)}}{2\overline{\kappa}_{\left(n\right)}}$
for all $n$, then $\tilde{\Phi}$ has the strong separation condition. 
\end{lem}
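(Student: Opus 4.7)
The plan is to show that for each $n$ and each pair $i\neq j$ in $I^{(n)}$, the images $\tilde\varphi_j^{(n)}([0,1])$ and $\tilde\varphi_i^{(n)}([0,1])$ are separated by a positive distance, by comparing the perturbed maps to the unperturbed ones pointwise and invoking the triangle inequality against the gap $g_{(n)}$.

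The key observation is that since $\varphi_e^{(n)}$ is linear with derivative $(\varphi_e^{(n)})'$, we may write
\[
\varphi_e^{(n)}(x)=\varphi_e^{(n)}(u_e^{(n)})+(\varphi_e^{(n)})'\int_{u_e^{(n)}}^{x}1\,dt,
\]
so subtracting this identity from the definition of $\tilde\varphi_e^{(n)}(x)$ yields
\[
\tilde\varphi_e^{(n)}(x)-\varphi_e^{(n)}(x)=(\varphi_e^{(n)})'\int_{u_e^{(n)}}^{x}\gamma_e^{(n)}(t)\,dt.
\]
Using $|(\varphi_e^{(n)})'|\le\overline{\kappa}_{(n)}$, the bound $|\gamma_e^{(n)}(t)|<\epsilon_n$, and $|x-u_e^{(n)}|\le 1$ (since $X=[0,1]$), I would conclude the uniform estimate
\[
\sup_{x\in[0,1]}\bigl|\tilde\varphi_e^{(n)}(x)-\varphi_e^{(n)}(x)\bigr|\le\overline{\kappa}_{(n)}\epsilon_n
\]
for every $e\in I^{(n)}$.

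With this in hand, for any $i\neq j$ in $I^{(n)}$ and any $x,y\in[0,1]$, the triangle inequality gives
\[
\bigl|\tilde\varphi_j^{(n)}(x)-\tilde\varphi_i^{(n)}(y)\bigr|\ge\bigl|\varphi_j^{(n)}(x)-\varphi_i^{(n)}(y)\bigr|-\bigl|\tilde\varphi_j^{(n)}(x)-\varphi_j^{(n)}(x)\bigr|-\bigl|\tilde\varphi_i^{(n)}(y)-\varphi_i^{(n)}(y)\bigr|\ge g_{(n)}-2\overline{\kappa}_{(n)}\epsilon_n,
\]
and the hypothesis $\epsilon_n<g_{(n)}/(2\overline{\kappa}_{(n)})$ makes this strictly positive. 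Taking the infimum over $x,y$ and the minimum over $i\neq j$ produces a strictly positive perturbed gap $\tilde g_{(n)}>0$, which is exactly the strong separation condition for $\tilde\Phi$ at level $n$.

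There is no real obstacle here: the argument is a direct comparison estimate. The only thing to verify carefully is that the integral representation of $\tilde\varphi_e^{(n)}-\varphi_e^{(n)}$ really collapses to something controlled by $\overline{\kappa}_{(n)}\epsilon_n$, which it does because $\varphi_e^{(n)}$ is exactly linear with the same derivative and the same base point $u_e^{(n)}$ that appears in the definition of $\tilde\varphi_e^{(n)}$.
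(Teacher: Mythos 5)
Your proposal is correct and follows essentially the same approach as the paper: bound $\sup_x|\tilde\varphi_e^{(n)}(x)-\varphi_e^{(n)}(x)|$ by $\overline{\kappa}_{(n)}\epsilon_n$ via the integral representation, then apply the triangle inequality against the unperturbed gap $g_{(n)}$. Your version is slightly cleaner in that it subtracts the two integral representations directly, but the key estimate and the final triangle-inequality step are identical to the paper's proof.
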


\begin{proof}
Observe that 
\begin{align*}
\left|\tilde{\varphi}_{j}^{\left(n\right)}\left(x\right)-\varphi_{j}^{\left(n\right)}\left(x\right)\right| & =\left|\varphi_{j}^{\left(n\right)}\left(u_{j}^{\left(n\right)}\right)+\left(\left(\varphi_{j}^{\left(n\right)}\right)'\right)\int_{u_{j}^{\left(n\right)}}^{x}\left(1+\gamma_{j}^{\left(n\right)}\left(t\right)\right)\,dt-\varphi_{j}^{\left(n\right)}\left(u_{j}^{\left(n\right)}\right)-\int_{u_{j}^{\left(n\right)}}^{x}\left(\varphi_{j}^{\left(n\right)}\right)'\left(t\right)\,dt\right|\\
 & \le\left|\left(\left(\varphi_{j}^{\left(n\right)}\right)'\right)\left(1+\epsilon_{n}\right)\left(x-u_{j}^{\left(n\right)}\right)-\left(\varphi_{j}^{\left(n\right)}\right)'\left(x-u_{j}^{\left(n\right)}\right)\right|\\
 & \le\epsilon_{n}\left|\left(\varphi_{j}^{\left(n\right)}\right)'\right|\\
 & \le\epsilon_{n}\overline{\kappa}_{\left(n\right)}\\
 & <\frac{g_{\left(n\right)}}{2}.
\end{align*}
Note that the right hand side is independent of $j\in I^{\left(n\right)}$.
Now, it is an elementary fact in analisys that $\left|a+b+c\right|\ge\left|a\right|-\left|b\right|-\left|c\right|$
for all $a,\,b,\,c\in\mathbb{R}$. Using this inequality we show that
\begin{align*}
\left|\tilde{\varphi}_{j}^{\left(n\right)}\left(x\right)-\tilde{\varphi}_{i}^{\left(n\right)}\left(y\right)\right| & =\left|\tilde{\varphi}_{j}^{\left(n\right)}\left(x\right)-\varphi_{j}^{\left(n\right)}\left(x\right)+\varphi_{j}^{\left(n\right)}\left(x\right)-\varphi_{i}^{\left(n\right)}\left(y\right)+\varphi_{i}^{\left(n\right)}\left(y\right)-\tilde{\varphi}_{i}^{\left(n\right)}\left(y\right)\right|\\
 & \ge\left|\varphi_{j}^{\left(n\right)}\left(x\right)-\varphi_{i}^{\left(n\right)}\left(y\right)\right|-\left|\tilde{\varphi}_{j}^{\left(n\right)}\left(x\right)-\varphi_{j}^{\left(n\right)}\left(x\right)\right|-\left|\varphi_{i}^{\left(n\right)}\left(y\right)-\tilde{\varphi}_{i}^{\left(n\right)}\left(y\right)\right|\\
 & >g_{\left(n\right)}-\frac{g_{\left(n\right)}}{2}-\frac{g_{\left(n\right)}}{2}\\
 & =0.
\end{align*}
 for all $j\ne i$ and all $x,\,y\in\left[0,\,1\right].$
\end{proof}
Furthermore, define
\begin{align*}
\overline{\tilde{\kappa}}_{\left(n\right)} & =\max_{e\in I^{\left(n\right)}}\sup_{x\in X}\left|\left(\tilde{\varphi}_{e}^{\left(n\right)}\right)'\left(x\right)\right|\\
\underline{\tilde{\kappa}}_{\left(n\right)} & =\min_{e\in I^{\left(n\right)}}\inf_{x\in X}\left|\left(\tilde{\varphi}_{e}^{\left(n\right)}\right)'\left(x\right)\right|\\
\overline{\tilde{\kappa}}_{n} & =\max_{\omega\in I^{n}}\sup_{x\in X}\left|\left(\tilde{\varphi}_{\omega}^{n}\right)'\left(x\right)\right|\\
\underline{\tilde{\kappa}}_{n} & =\min_{\omega\in I^{n}}\inf_{x\in X}\left|\left(\tilde{\varphi}_{\omega}^{n}\right)'\left(x\right)\right|.
\end{align*}
Observe that 
\[
\prod_{j=1}^{n}\underline{\tilde{\kappa}}_{\left(j\right)}\le\underline{\tilde{\kappa}}_{n}\le\overline{\tilde{\kappa}}_{n}\le\prod_{j=1}^{n}\overline{\tilde{\kappa}}_{\left(j\right)}.
\]

Now, 
\[
\frac{\overline{\tilde{\kappa}}_{n}}{\underline{\tilde{\kappa}}_{n}}\le\frac{\overline{\kappa}_{n}}{\underline{\kappa}_{n}}\prod_{k=1}^{n}\frac{1+\overline{\gamma^{\left(k\right)}}}{1-\overline{\gamma^{\left(k\right)}}}.
\]

Since $\left|\gamma\right|\le1-\epsilon$ it follows that $\underline{\tilde{\kappa}}_{\left(n\right)}>0$
for all $n$. From this estimate we see that the sequence $\left(\frac{\overline{\tilde{\kappa}}_{n}}{\underline{\tilde{\kappa}}_{n}}\right)$
is bounded if $\left(\frac{\overline{\kappa}_{n}}{\underline{\kappa}_{n}}\right)$
is bounded and if $\sup_{n\ge1}\prod_{k=1}^{n}\frac{1+\overline{\gamma^{\left(k\right)}}}{1-\overline{\gamma^{\left(k\right)}}}<\infty$.
Note that 
\begin{align*}
\sup_{n\ge1}\prod_{k=1}^{n}\frac{1+\overline{\gamma^{\left(k\right)}}}{1-\overline{\gamma^{\left(k\right)}}}<\infty & \Longleftrightarrow\sum_{k\ge1}\log\left(\frac{1+\overline{\gamma^{\left(k\right)}}}{1-\overline{\gamma^{\left(k\right)}}}\right)<\infty\\
 & \Longleftrightarrow\sum_{k\ge1}\overline{\gamma^{\left(k\right)}}<\infty,
\end{align*}
where the last step follows from the limit comparison test in calculus.
Hence, we have the following
\begin{thm}
Suppose that $\Phi$ is a nonautonomous IFS of linear functions satisfying
the hypothesis of Theorem \ref{thm:boundedwithgaps} and that $\left(\gamma_{e}^{\left(n\right)}\left(x\right)\right)_{n\in\mathbb{N},\,e\in I^{\left(n\right)}}$
is a sequence of H\"older-continuous functions from $\left[0,\,1\right]$
into $\left(-\epsilon_{n},\,\epsilon_{n}\right)$ for some $\epsilon_{n}\in\left(0,\,1\right).$
Furthermore, let $\tilde{\Phi}$ be a nonlinear perturbation of $\Phi$
defined as
\[
\tilde{\varphi}_{e}^{\left(n\right)}\left(x\right)=\varphi_{e}^{\left(n\right)}\left(u_{e}^{\left(n\right)}\right)+\left(\left(\varphi_{e}^{\left(n\right)}\right)'\right)\int_{u_{e}^{\left(n\right)}}^{x}\left(1+\gamma_{e}^{\left(n\right)}\left(t\right)\right)\,dt.
\]
 If 
\[
\sum_{k\ge1}\overline{\gamma^{\left(k\right)}}<\infty,
\]
and either
\begin{enumerate}
\item [(11a)] $\tilde{\varphi}_{e}^{\left(n\right)}\left(\left[0,\,1\right]\right)\subset\varphi_{e}^{\left(n\right)}\left(\left[0,\,1\right]\right),$
or
\item [(11b)] $0<\epsilon_{n}<\frac{g_{\left(n\right)}}{2\overline{\kappa}_{\left(n\right)}},$
\end{enumerate}
then $\tilde{\Phi}$ satisfies the hypothesis of Theorem \ref{thm:boundedwithgaps}.
\end{thm}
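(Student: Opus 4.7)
The proof plan is to verify each of the hypotheses of Theorem \ref{thm:boundedwithgaps} for $\tilde{\Phi}$ relative to the same shrinking-target data $(\beta_n)$; once this is done the theorem applies directly. Every estimate will ride on the derivative sandwich
\[
\underline{\kappa}_{(n)}\bigl(1-\overline{\gamma^{(n)}}\bigr)\le \bigl|(\tilde{\varphi}_e^{(n)})'(x)\bigr|\le \overline{\kappa}_{(n)}\bigl(1+\overline{\gamma^{(n)}}\bigr)
\]
obtained above, together with two consequences of $\sum_{k\ge1}\overline{\gamma^{(k)}}<\infty$: first, $\overline{\gamma^{(k)}}\to 0$; second, the telescoping products $\prod_{k=1}^{n}(1\pm\overline{\gamma^{(k)}})^{\pm 1}$ are bounded uniformly in $n$. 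Together these turn every condition phrased in terms of derivatives of $\Phi$ into the same condition on $\tilde{\Phi}$, up to a uniformly bounded multiplicative constant.

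The easy conditions are OSC, UCC, ESC, LVC, and the bounded ratio $\overline{\tilde{\kappa}}_n/\underline{\tilde{\kappa}}_n$. OSC under (11b) is the content of the preceding lemma; under (11a) it is immediate because $\tilde{\varphi}_e^{(n)}(X)\subseteq\varphi_e^{(n)}(X)$ transports the OSC of $\Phi$. UCC holds because $\overline{\gamma^{(k)}}\to 0$ lets us pick $\tilde{\theta}\in(0,\theta)$ absorbing the factor $(1+\overline{\gamma^{(k)}})$ for all but finitely many $k$, with the finite exceptional set handled by further decreasing $\tilde{\theta}$ (using $\overline{\gamma^{(k)}}<1$). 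For ESC, I would write
\[
\beta_k(\xi)+\log\underline{\tilde{\kappa}}_{(k)}\ge \beta_k(\xi)+\log\underline{\kappa}_{(k)}+\log\bigl(1-\overline{\gamma^{(k)}}\bigr)
\]
and the matching upper bound; since the correction tends to $0$, new constants $\underline{\tilde{\alpha}}>0$ and $\overline{\tilde{\alpha}}<\infty$ can be extracted that work uniformly in $k$. LVC depends only on $(\beta_n)$ and is preserved verbatim, and the ratio bound is established in the paragraph immediately preceding the theorem.

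Because $d=1$, BDP is not automatic and must be verified by hand; the chain-rule identity
\[
\frac{|(\tilde{\varphi}_\omega^n)'(x)|}{|(\tilde{\varphi}_\omega^n)'(y)|}=\prod_{k=1}^{n}\frac{1+\gamma_{\omega_k}^{(k)}\bigl(\tilde{\varphi}_{\sigma^k\omega}^{(k+1,n)}(x)\bigr)}{1+\gamma_{\omega_k}^{(k)}\bigl(\tilde{\varphi}_{\sigma^k\omega}^{(k+1,n)}(y)\bigr)}
\]
is dominated by $\prod_{k=1}^{n}(1+\overline{\gamma^{(k)}})/(1-\overline{\gamma^{(k)}})$, hence uniformly bounded by summability. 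For NEQ, a parallel telescoping estimate shows that the perturbed projection $\tilde{\pi}_n$ is uniformly close to $\pi_n$ on $I^{(n+1,\infty)}$, so $\tilde{J}_n$ lies within a uniformly small Hausdorff distance of $J_n$; the assumption $J_n\cap X_\varepsilon\neq\emptyset$ then forces $\tilde{J}_n\cap X_{\varepsilon/2}\neq\emptyset$ for all sufficiently large $n$, and the finitely many small $n$ are handled individually.

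The main obstacle is producing an $\tilde{h}$-Ahlfors measure on $\tilde{J}$, where $\tilde{h}=\mathrm{HD}(\tilde{J})$. The plan is to invoke the sufficient-condition theorem of Section \ref{sec:Ahlfors-Measures} applied to $\tilde{\Phi}$: the sequence $\#I^{(n)}$ is unchanged and $\tilde{\rho}_n\le \rho_n\cdot(1+\overline{\gamma^{(n)}})/(1-\overline{\gamma^{(n)}})$ is bounded, while $\tilde{Z}_n(\tilde{h})$ and $\tilde{Z}_n^{-1}(\tilde{h})$ are controlled by multiplying $Z_n(\tilde{h})$ by the uniformly bounded factor $\prod_{k=1}^{n}\bigl((1+\overline{\gamma^{(k)}})/(1-\overline{\gamma^{(k)}})\bigr)^{\tilde{h}}$. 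The delicate point, which I expect to be the real difficulty, is that a priori $\tilde{h}$ need not equal $h$; so one first uses the already-verified OSC, BDP, UCC, and ESC for $\tilde{\Phi}$ to place $\tilde{h}$ in a range in which the boundedness of $(Z_n(h))$ for $\Phi$ transfers to boundedness of $(Z_n(\tilde{h}))$, after which the uniformly bounded perturbation factor closes the argument. This is precisely the step where the hypothesis $\sum_{k\ge1}\overline{\gamma^{(k)}}<\infty$ is indispensable.
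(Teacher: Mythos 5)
Your overall plan---verify each hypothesis of Theorem~\ref{thm:boundedwithgaps} for $\tilde{\Phi}$ one by one using the derivative sandwich and the summability of $\overline{\gamma^{(k)}}$---is the same route the paper takes, and you are in fact considerably more explicit than the paper's own discussion, which only spells out the separation lemma and the computation of $\overline{\tilde{\kappa}}_n/\underline{\tilde{\kappa}}_n$. Your treatment of OSC, BDP, LVC, and the bounded-ratio condition is sound, and your chain-rule computation for BDP is exactly the right estimate.

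There is, however, a genuine gap in the Ahlfors-measure step. You propose to apply the sufficient-condition theorem of Section~\ref{sec:Ahlfors-Measures} to $\tilde{\Phi}$, but that theorem requires the sequences $\left(\#I^{(n)}\right)$, $\left(\rho_n\right)$, $\left(Z_n(h)\right)$ and $\left(Z_n^{-1}(h)\right)$ to be bounded. None of these boundedness conditions are part of the hypothesis of Theorem~\ref{thm:boundedwithgaps}; that hypothesis only posits the \emph{existence} of an $h$-Ahlfors measure supported on $J$, and while Ahlfors regularity together with OSC and ESC forces $Z_n(h)\asymp 1$ and $\rho_n$ bounded, it does not give a uniform bound on $\#I^{(n)}$. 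So you cannot simply re-run Section~\ref{sec:Ahlfors-Measures} for $\tilde{\Phi}$. The efficient route is to transport the \emph{given} measure: the symbolic conjugacy $\tilde{\pi}\circ\pi^{-1}\colon J\to\tilde{J}$ pushes $\mu_h$ to a measure $\nu$ on $\tilde{J}$ with $\nu\bigl(\tilde{\varphi}_{\omega}^{n}(\tilde{J}_n)\bigr)=\mu_h\bigl(\varphi_{\omega}^{n}(J_n)\bigr)$, and since $\bigl\|D\tilde{\varphi}_{\omega}^{n}\bigr\|$ and $\bigl\|D\varphi_{\omega}^{n}\bigr\|$ differ by the uniformly bounded factor $\prod_{k\le n}\bigl(1\pm\overline{\gamma^{(k)}}\bigr)$, the two cylinder diameters are uniformly comparable, so $\nu$ inherits $h$-Ahlfors regularity. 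This same uniform comparability of $\tilde{Z}_n(t)$ and $Z_n(t)$ forces the lower pressure functions of the two systems to coincide, whence $\tilde{h}=\mathrm{HD}(\tilde{J})=\mathrm{HD}(J)=h$ by the Rempe-Gillen--Urba\'nski formula; the worry that $\tilde{h}\neq h$, and the vague ``place $\tilde{h}$ in a range'' step, are therefore unnecessary.

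Two smaller points. For ESC you assert that ``new constants $\underline{\tilde{\alpha}}>0$ can be extracted that work uniformly in $k$,'' but for the finitely many $k$ with $\overline{\gamma^{(k)}}$ close to $1$ the correction $\log\bigl(1-\overline{\gamma^{(k)}}\bigr)$ can be arbitrarily negative, and nothing in the hypotheses prevents $\underline{\alpha}+\log\bigl(1-\overline{\gamma^{(k)}}\bigr)\le 0$; one needs either to assume $\overline{\gamma^{(k)}}<1-e^{-\underline{\alpha}}$ for those $k$, or to replace $\beta_k$ by $\tilde{\beta}_k=\beta_k-\log\bigl(1+\gamma_{\cdot}^{(k)}\bigr)$ so that $\tilde{\beta}_k+\log\underline{\tilde{\kappa}}_{(k)}=\beta_k+\log\underline{\kappa}_{(k)}$ is untouched (which, thanks to summability, preserves LVC and the Bowen parameter). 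The paper itself glosses over this. Finally, for NEQ under alternative (11b), the ``handled individually'' fallback for small $n$ still needs a reason why $\tilde{J}_n$ avoids the two endpoints of $[0,1]$; under (11a) this is automatic since $\tilde{J}_n\subseteq J_n$, but under (11b) it deserves at least one line of justification.
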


We wish to formulate a similar theorem for pertubed systems corresponding
to Theorem \ref{thm:unboundedwithgaps}. If we now assume that the
ESC and (\ref{eq:subexponential}) hold, then we see that 
\begin{align*}
0 & \le\frac{1}{n}\log\frac{\overline{\tilde{\kappa}}_{n}}{\underline{\tilde{\kappa}}_{n}}\le\frac{1}{n}\log\frac{\overline{\kappa}_{n}}{\underline{\kappa}_{n}}+\frac{1}{n}\log\prod_{k=1}^{n}\left(1+\overline{\gamma^{\left(k\right)}}\right)-\frac{1}{n}\log\prod_{k=1}^{n}\left(1-\overline{\gamma^{\left(k\right)}}\right)\\
 & \sim\frac{1}{n}\log\frac{\overline{\kappa}_{n}}{\underline{\kappa}_{n}}+\frac{1}{n}\sum_{k=1}^{n}\overline{\gamma^{\left(k\right)}}.
\end{align*}
From (\ref{eq:subexponential}) it suffices to have $\frac{1}{n}\sum_{k=1}^{n}\overline{\gamma^{\left(k\right)}}=0,$
which holds whenever 
\[
\lim_{k\to\infty}\overline{\gamma^{\left(k\right)}}=0.
\]

\begin{thm}
Suppose that $\Phi$ is a nonautonomous IFS of linear functions satisfying
the hypothesis of Theorem \ref{thm:unboundedwithgaps} and that $\left(\gamma_{e}^{\left(n\right)}\left(x\right)\right)_{n\in\mathbb{N},\,e\in I^{\left(n\right)}}$
is a sequence of H\"older-continuous functions from $\left[0,\,1\right]$
into $\left(-\epsilon_{n},\,\epsilon_{n}\right)$ for some $\epsilon_{n}\in\left(0,\,1\right).$
Furthermore, let $\tilde{\Phi}$ be a nonlinear perturbation of $\Phi$
defined as
\[
\tilde{\varphi}_{e}^{\left(n\right)}\left(x\right)=\varphi_{e}^{\left(n\right)}\left(u_{e}^{\left(n\right)}\right)+\left(\left(\varphi_{e}^{\left(n\right)}\right)'\right)\int_{u_{e}^{\left(n\right)}}^{x}\left(1+\gamma_{e}^{\left(n\right)}\left(t\right)\right)\,dt.
\]
 If 
\[
\frac{1}{n}\sum_{k=1}^{n}\overline{\gamma^{\left(k\right)}}\to0,
\]
in particular, if $\lim_{k\to\infty}\overline{\gamma^{\left(k\right)}}=0,$
and either
\begin{enumerate}
\item [(13a)] $\tilde{\varphi}_{e}^{\left(n\right)}\left(\Delta\right)\subset\varphi_{e}^{\left(n\right)}\left(\Delta\right),$
or
\item [(13b)] for all $n$, $\gamma_{e}^{\left(n\right)}\left(\Delta\right)\subset\left(-\epsilon_{n},\,\epsilon_{n}\right)$
and $0<\epsilon_{n}<\frac{g_{\left(n\right)}}{2\overline{\kappa}_{\left(n\right)}},$
\end{enumerate}
then $\tilde{\Phi}$ satisfies the hypothesis of Theorem \ref{thm:unboundedwithgaps}.
\end{thm}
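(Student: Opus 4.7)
The plan is to verify each hypothesis of Theorem \ref{thm:unboundedwithgaps} for the perturbed system $\tilde{\Phi}$, relying throughout on the pointwise derivative comparison
\[
\underline{\kappa}_{n}\prod_{k=1}^{n}\bigl(1-\overline{\gamma^{\left(k\right)}}\bigr)\le\bigl|\bigl(\tilde{\varphi}_{\omega}^{n}\bigr)'\left(x\right)\bigr|\le\overline{\kappa}_{n}\prod_{k=1}^{n}\bigl(1+\overline{\gamma^{\left(k\right)}}\bigr)
\]
already established above. This decouples the checklist into symbolic conditions, which transfer automatically, and metric conditions, which absorb a controlled multiplicative factor.

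First, the alphabets $I^{\left(n\right)}$ and the coding sequence $\xi^{\left(n\right)}$ are unchanged, so the pressure $\overline{P}_{\beta}$ of (\ref{eq:Pressure}) is literally the same function for $\tilde{\Phi}$ as for $\Phi$; hence $\overline{P}_{\beta}=\underline{P}_{\beta}$ on a neighborhood of $b$ and the Bowen parameter of $\tilde{\Phi}$ equals the original $b$. The subexponential condition (\ref{eq:subexponential}) for $\tilde{\Phi}$ follows from the estimate displayed just before the statement, since
\[
\frac{1}{n}\log\frac{\overline{\tilde{\kappa}}_{n}}{\underline{\tilde{\kappa}}_{n}}\le\frac{1}{n}\log\frac{\overline{\kappa}_{n}}{\underline{\kappa}_{n}}+\frac{1}{n}\sum_{k=1}^{n}\log\frac{1+\overline{\gamma^{\left(k\right)}}}{1-\overline{\gamma^{\left(k\right)}}}
\]
tends to $0$ by the Ces\`aro hypothesis on $\overline{\gamma^{\left(k\right)}}$ together with (\ref{eq:subexponential}) for $\Phi$.

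Next I would verify UCC, ESC, OSC, and NEQ by absorbing the perturbation factor. From $\overline{\tilde{\kappa}}_{\left(k\right)}\le\overline{\kappa}_{\left(k\right)}\bigl(1+\overline{\gamma^{\left(k\right)}}\bigr)$ and the fact that $\overline{\gamma^{\left(k\right)}}$ lies below any fixed $\delta>0$ on a density-one set of indices, one picks $\tilde{\theta}\in\left(0,\theta\right)$ with $\overline{\tilde{\kappa}}_{\left(k\right)}\le e^{-\tilde{\theta}}$ for all large $k$, absorbing the finitely many exceptional indices into a constant. ESC transforms by adding $\log\bigl(1\pm\overline{\gamma^{\left(k\right)}}\bigr)$ to $\beta_{k}+\log\kappa_{\left(k\right)}$, a bounded perturbation absorbed into new constants $\tilde{\underline{\alpha}},\tilde{\overline{\alpha}}$. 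OSC is immediate under (13a) because $\tilde{\varphi}_{e}^{\left(n\right)}\bigl(\left[0,1\right]\bigr)\subseteq\varphi_{e}^{\left(n\right)}\bigl(\left[0,1\right]\bigr)$, and under (13b) it is precisely the strong-separation lemma proved just above. For NEQ, the Hausdorff distance between $\tilde{\varphi}_{\omega}^{n}\left(X\right)$ and $\varphi_{\omega}^{n}\left(X\right)$ is controlled by the tail of contractions, so an interior point of $J_{n}$ lying in $X_{\varepsilon}$ yields an interior point of $\tilde{J}_{n}$ lying in $X_{\varepsilon/2}$ for all $n$ large enough.

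The main obstacle is the existence of an $\tilde{h}$-Ahlfors measure on $\tilde{J}$, where $\tilde{h}=\text{HD}\bigl(\tilde{J}\bigr)$. My plan is to apply the Ahlfors theorem of Section \ref{sec:Ahlfors-Measures} to $\tilde{\Phi}$ and check its four bounded-sequence conditions: $\#I^{\left(n\right)}$ is unchanged; $\tilde{\rho}_{n}\le\rho_{n}\cdot\frac{1+\overline{\gamma^{\left(n\right)}}}{1-\overline{\gamma^{\left(n\right)}}}$ is bounded since $\Phi$ is linear (making $\rho_{n}$ bounded) and $\epsilon_{n}<1$; and $\tilde{Z}_{n}\bigl(\tilde{h}\bigr)^{\pm1}$ is estimated by raising the derivative comparison to the $\tilde{h}$-th power,
\[
\Bigl(\prod_{k=1}^{n}\bigl(1-\overline{\gamma^{\left(k\right)}}\bigr)\Bigr)^{\tilde{h}}Z_{n}\bigl(\tilde{h}\bigr)\le\tilde{Z}_{n}\bigl(\tilde{h}\bigr)\le\Bigl(\prod_{k=1}^{n}\bigl(1+\overline{\gamma^{\left(k\right)}}\bigr)\Bigr)^{\tilde{h}}Z_{n}\bigl(\tilde{h}\bigr),
\]
reducing the question to bounding $Z_{n}\bigl(\tilde{h}\bigr)^{\pm1}$ for $\Phi$. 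The delicate point is that $\tilde{h}$ need not equal $h$, so this reduction requires a continuity-of-Hausdorff-dimension estimate under the perturbation together with uniform control of the two product factors in $n$; the latter in turn calls either for strengthening the Ces\`aro hypothesis to summability (as in the preceding theorem) or for a more careful balancing of the Ahlfors bounds against the products. Once this step is handled, all the hypotheses of Theorem \ref{thm:unboundedwithgaps} are verified for $\tilde{\Phi}$, completing the proof.
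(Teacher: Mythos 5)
Your core computation — that the Ces\`aro condition $\frac{1}{n}\sum_{k=1}^{n}\overline{\gamma^{\left(k\right)}}\to0$ propagates the subexponential estimate~(\ref{eq:subexponential}) through the inequality $\frac{1}{n}\log\frac{\overline{\tilde{\kappa}}_{n}}{\underline{\tilde{\kappa}}_{n}}\le\frac{1}{n}\log\frac{\overline{\kappa}_{n}}{\underline{\kappa}_{n}}+\frac{1}{n}\sum_{k=1}^{n}\log\frac{1+\overline{\gamma^{\left(k\right)}}}{1-\overline{\gamma^{\left(k\right)}}}$ — is exactly the paper's argument, as is the appeal to the strong-separation lemma for OSC under (13b). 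Your observation that the pressure function~(\ref{eq:Pressure}) is \emph{literally unchanged} by the perturbation (since it involves only $\beta_n$ and $\xi^{\left(n\right)}$, not the maps) is a clean point that the paper leaves implicit.

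Where you go beyond the paper, you also expose a genuine soft spot: the paper offers no verification at all that $\tilde{\Phi}$ supports an $\tilde{h}$-Ahlfors measure. You correctly note that applying the Section~\ref{sec:Ahlfors-Measures} criterion to $\tilde{\Phi}$ runs into two obstacles: the exponent $\tilde{h}=\text{HD}\bigl(\tilde{J}\bigr)$ may differ from $h$ when the $\overline{\gamma^{\left(k\right)}}$ are merely Ces\`aro-null, and the factors $\prod_{k=1}^{n}\bigl(1\pm\overline{\gamma^{\left(k\right)}}\bigr)$ comparing $\tilde{Z}_n$ to $Z_n$ need not be bounded in $n$ without summability. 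Since the paper's stated "proof" is only the preceding paragraph plus the separation lemma, this is a gap in the paper, not a failure on your part, and you are right to flag it rather than paper over it. For the checklist items you do handle (UCC, ESC, NEQ), be a bit more careful about the finitely many early indices: since $\epsilon_n$ is only assumed in $\left(0,1\right)$, $\log\bigl(1-\overline{\gamma^{\left(k\right)}}\bigr)$ can be very negative for small $k$, and you must argue that this does not destroy $\underline{\alpha}>0$ in ESC or the uniform contraction bound $e^{-\tilde\theta}<1$ in UCC — the eventual smallness of $\overline{\gamma^{\left(k\right)}}$ handles all but finitely many $k$, but those early levels need either (13a) to force $\overline{\tilde{\kappa}}_{\left(k\right)}\le\overline{\kappa}_{\left(k\right)}$, or an explicit extra hypothesis.
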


\subsection*{Acknowledgements:}

The author would like to thank Mariusz Urba\'{n}ski, who provided
much guidance during this project. The author also acknowledges the
financial support of the Warsaw Center of Mathematics and Computer
Science, where he spent one semester working on this project under
the guidance of Krzysztof Bara\'{n}ski. This project was carried out
as part of a doctoral program which was funded by CONACYT.

\bibliographystyle{plain}
\bibliography{My_Library}

\end{document}